\newtheorem{theorem}{Theorem}[section]
\newtheorem{proposition}[theorem]{Proposition}
\newtheorem{lemma}[theorem]{Lemma}
\theoremstyle{definition}
\newtheorem{definition}[theorem]{Definition}
\newtheorem{example}[theorem]{Example}
\newtheorem{problem}{Problem}
\newtheorem{remark}[theorem]{Remark}
\newcommand\norm[1]{\left\lvert#1\right\rvert}
\newcommand\edcedge[3]{#1 \rightarrow (#2, #3)}
\newcommand\ket[1]{\langle #1 \rangle}
\begin{document}

\title{Decidability of Irreducible tree shifts of finite type}


\author{Jung-Chao Ban}
\address[Jung-Chao Ban]{Department of Mathematical Sciences, National Chengchi University, Taipei 11605, Taiwan, ROC.}
\address{Math. Division, National Center for Theoretical Science, National Taiwan University, Taipei 10617, Taiwan. ROC.}
\email{jcban@nccu.edu.tw}

\author[Chih-Hung Chang]{Chih-Hung Chang*}
\thanks{*Author to whom any correspondence should be addressed.} 
\address[Chih-Hung Chang]{Department of Applied Mathematics, National University of Kaohsiung, Kaohsiung 81148, Taiwan, ROC.}
\email{chchang@nuk.edu.tw}

\author{Nai-Zhu Huang}
\address[Nai-Zhu Huang]{Department of Applied Mathematics, National Chiao Tung University, Hsinchu 30010, Taiwan, ROC.}
\email{naizhu7@gmail.com}

\author{Yu-Liang Wu}
\address[Yu-Liang Wu]{Department of Electrical and Computer Engineering, National Chiao Tung University, Hsinchu 30010, Taiwan, ROC.}
\email{s92077.eed04@g2.nctu.edu.tw}

\thanks{This work is partially supported by the Ministry of Science and Technology, ROC (Contract No MOST 107-2115-M-259 -001 -MY2 and 107-2115-M-390 -002 -MY2).}
\date{}

\baselineskip=1.2\baselineskip

\begin{abstract}
We reveal an algorithm for determining the complete prefix code irreducibility (CPC-irreducibility) of dyadic trees labeled by a finite alphabet. By introducing an extended directed graph representation of tree shift of finite type (TSFT), we show that the CPC-irreducibility of TSFTs is related to the connectivity of its graph representation, which is a similar result to one-dimensional shifts of finite type.
\end{abstract}

\maketitle

\section{Introduction}

Tree shifts, introduced by Aubrun and B\'{e}al \cite{AB-TCS2012, AB-TCS2013}, are shift spaces over Cayley trees. They are more complicated than one-dimensional shift spaces while still possess a natural one-dimensional structure of symbolic dynamical systems equipped with multiple shift maps. In classical symbolic dynamical systems of one-dimension, shifts of finite type (SFTs) play a fundamental and an essential role, and the investigation into their graph representations uncovers crucial properties such as irreducibility, mixing, and the existence of periodic points (cf.~\cite{Dev-1987, LM-1995}). We list some well-studied properties below. An SFT is nonempty if and only if its essential graph representation contains a cycle. Every nonempty SFT contains periodic points and an irreducible SFT has dense periodic points (see \cite{Kit-1998, LM-1995}). Nevertheless, when dealing with multidimensional shift spaces, contrary results have been obtained.

Firstly, the emptiness problem is undecidable for two-dimensional SFTs; there is an aperiodic SFT which has positive topological entropy, and there is a nonempty SFT which exhibits nonextensible local patterns \cite{Berger-MAMS1966, CulikII-DM1996, GK-SMJ1972, Kari-DM1996, Robinson-IM1971, Schmidt-2001}. These results reveal the uncertainty of multidimensional shift spaces and have attracted much attention. Recently, Sharma and Kumar \cite{SK-2016} demonstrated the necessary and sufficient condition for determining if a multidimensional SFT is empty and went further to provide a sufficient condition for multidimensional SFTs exhibiting periodic points. More precisely, they used a paricular irreducibility and mixing conditions to guarantee the nonemptiness, as well as the denseness of periodic points, of shift spaces. Boyle \emph{et al.}~\cite{BPS-TAMS2010} introduced a mixing condition known as \emph{block gluing} and showed that every two-dimensional block gluing SFT has dense periodic points; however, the denseness of periodic points in general multidimensional block-gluing SFTs is yet to be determined. Besides, there is the lack of an algorithm for determining if a shift space contains dense periodic points since it is undecidable. (Note that containing dense periodic points is a necessary condition of chaos in the sense of Devaney, much effort has  been put into finding the criteria for it.) Chandgotia and Marcus \cite{CM-PJM2018} provided a sufficient condition for block gluing shift spaces that are derived from an undirected connected graph, pointing out the key role played by the finiteness of the diameter of the corresponding graph. As there are weakly and strongly periodic points in multidimensional shifts (a weakly periodic point $x$ of a $\mathbb{Z}^n$-SFT is a point that satisfies $\sigma_u x = x$ for some $0 \neq u \in \mathbb{Z}^n$), there exists $\mathbb{Z}^n$-SFT which has no weakly periodic point \cite{CK-JUCS1995}. Since it is difficult to verify the existence of weakly/strongly periodic points for general multidimensional shifts, mixing property is crucial in solving such problems. For more details on the recent works in multidimensional shift spaces, the readers are referred to \cite{BHL+-2015, BFM-IJAC2005, BPS-TAMS2010, Briceno-ETDS2016, JM-ETDS2005, Lightwood-ETDS2003, MP-1979, MP-PLMS1981, MP-2016, PS-TAMS2015, Schraudner-DCDS2010, Ward-IMN1994} and the references therein. While it is acknowledged that stronger mixing properties yield a better structure of the systems, the examination of mixing properties remains a challenge.

Another important problem in symbolic dynamics is the classification of shift spaces. While the conjugacy of one-sided shifts of finite type is decidable \cite{Wil-AoM1973}, the conjugacy of multidimensional SFTs is not (see \cite{CJJ+-ETDS2003, JP-JCSS2015, JM-PAMS1999, LS-2002} for instance). Aubrun and B\'{e}al introduced the so-called \emph{CPC-irreducible tree shifts} (defined in Section 4) and provided an algorithm for determining the conjugacy between CPC-irreducible TSFTs \cite{AB-TCS2012}. In addition, they addressed the issue concerning the existence of CPC-irreducible sofic tree shifts which are not the factors of TSFTs \cite{AB-TCS2012, AB-TCS2013}; this is inconsistent with the classical one-dimensional case where an irreducible sofic shift is the factor of some irreducible SFT \cite{LM-1995}. Meanwhile, the domino problem (also known as the emptiness problem) is undecidable on surface groups (cf.~\cite{ABM-2018}). Piantadosi \cite{Piantadosi-DCDS2008} indicated that every SFT over finitely generated free group $G$ has a weakly periodic point and there is a $G$-SFT which has no strongly periodic point. It is of interest that under what condition a TSFT must contain strongly periodic points. An affirmative result was obtained by Ban and Chang \cite{BC-TAMS2017, BC-JMP2017}, demonstrating that every CPC-irreducible TSFT contains dense \emph{CPC-periodic points} (defined in Section 4) with a CPC-periodic point being strongly periodic. (We remark that Ceccherini-Silberstein \emph{et al.} \cite{CCF+-TCS2013} also demonstrated that strongly periodic points are dense in all sofic tree shifts.) The question of the existence of an algorithm which determines the CPC-irreducibility of TSFTs subsequently follows. Utilizing graph representation of TSFT introduced in \cite{BC-JMP2017} for the emptiness problem, the study of irreducibility is then extended from one-dimensional SFTs to TSFTs.

\begin{table}
\begin{tabular}{cccc}
			\textbf{Space} & $\mathbb{Z}$-SFTs & $\mathbb{Z}^n$-SFTs & TSFTs \\
			\hline
			\textbf{Specification} & Irreducible & Irreducible & CPC-irreducible \\
			\textbf{Decidability}& Decidable & Undecidable & Decidable
\end{tabular}
\caption{Decidability of irreducibility of shifts of finite type.}
\label{table:decide-irr}
\end{table}

In this paper, we demonstrate that CPC-irreducibility of TSFTs is decidable and derive an algorithm for the examination of CPC-irreducibility. The difference between CPC-irreducibility and classical irreducibility is that CPC-irreducibility builds a wall-like cross-section for a given pattern with the second pattern necessarily sticking to the entire ``wall'', where as the classical irreducibility requires only that any two given patterns can be connected. We introduce \emph{extended directed graph representation} (defined in Section 5) of tree shifts of finite type, which is an extension of the classical graph representation of shifts of finite type (cf.~\cite{LM-1995}) and graph representation of TSFT introduced by Ban and Chang (cf.~\cite{BC-TAMS2017}). More specifically, an extended directed graph contains a set consisting of \emph{divergent-edges}, reflecting the structure of the tree and local patterns. The divergent-edge set is the main difference between graph representations of SFTs and TSFTs, and plays a crucial role in the determination of CPC-irreducibility. After a necessary repeated process of reduction, a tree shift of finite type is CPC-irreducible if and only if its extended graph representation is strongly connected. Table \ref{table:decide-irr} lists the decidability of the irreducibility of shifts of finite type over different underlying lattices.

The paper is organized as follow. We first reiterate the definitions of tree shifts that are relevant to the analysis here in Section 2. In Section 3, properties of complete prefix codes are introduced whereas in Section 4 those relations between CPC-irreducible TSFTs that are necessary in deriving the main results are presented. We next introduce extended directed graph representation of TSFTs in Section 5 and in Section 6 we demonstrate that the CPC-irreducibility of TSFTs is decidable. Finally, the flowchart of the algorithm is presented together with a brief discussion and open problems in the concluding section.

\section{Notation and Terminology}

Despite most of what we derive extends to general trees, we focus on labelings of the infinite dyadic tree, which is the set of all finite words on a two-element alphabet $\Sigma = \{s_1, s_2\}$. Algebraically speaking, the infinite dyadic tree $\Sigma^* = \bigcup\limits_{n \geq 0} \Sigma^n$ is a free monoid, where $\Sigma^n$ denotes the set of all finite words of length $n$ and $\Sigma^0 = \{\epsilon\}$ consists of the empty word. A word $g \in \Sigma^*$ corresponds uniquely to a node of the tree. We denote by $|g|$ the length of the word $g$.

Let $\mathcal{A}$ be a finite labeling set. A \emph{labeled tree} (or \emph{configuration}) is a function $t: \Sigma^* \to \mathcal{A}$. For each $g \in \Sigma^*$, $t_g := t(g)$ is the label attached to the node determined by $g$. We denote by $\mathcal{T}$ (or $\mathcal{A}^{\Sigma^*}$) the set of all labeled trees on $\mathcal{A}$. The shift transformation $\sigma: \Sigma^* \times \mathcal{T} \to \mathcal{T}$ is defined by $(\sigma_w t)_g := \sigma(w, t)_g = t_{wg}$ for all $w, g \in \Sigma^*$. For each $n \geq 0$, let $\Delta_n = \bigcup\limits_{0 \leq i \leq n} \Sigma^i$ denote the initial $n$-subtree of the dyadic tree. Note that $\Delta_n$ has $n+1$ levels. An \emph{$n$-block} $u$ is a labeling of the $n$-subtree $\Delta_n$, and $\Delta_n$, which is denoted by $s(u)$, is called the \emph{support} of $u$. We say that an $n$-block $u$ \emph{appears} in a labeled tree $t$ (or $u$ \emph{is accepted} by $t$) if there is a node $g \in \Sigma^*$ such that $t_{gw} = u_w$ for all $w \in \Delta_n$. A \emph{tree shift} $X$ is the set of all labeled trees which avoid all of a certain set of blocks (such a set is called a \emph{forbidden set} of $X$). We denote by $X = \mathsf{X}_{\mathcal{F}}$. A tree shift $X$ is called a \emph{tree shift of finite type} (TSFT) if $X = \mathsf{X}_{\mathcal{F}}$ for some finite forbidden set $\mathcal{F}$.

Suppose $u$ is a $1$-block for which $u_{\epsilon} = \alpha, u_{s_1} = \beta$, and $u_{s_2} = \gamma$. We may write $u$ as $\edcedge{\alpha}{\beta}{\gamma}$ for convenience. Furthermore, we denote by $\partial \Delta_n$ the \emph{boundary} of the initial $n$-subtree; that is,
$$
\partial \Delta_n = \{g \in \Delta_n: g s_i \notin \Delta_n \text{ for } i = 1, 2\} = \{g \in \Delta_n: |g| = n\}.
$$
A subset $S$ of $\Sigma^*$ is called \emph{prefix-closed} if all prefixes of $S$ are in $S$, and the boundary of $S$ is defined similarly as above; that is,
$$
\partial S = \{g \in S: g s_i \notin S \text{ for } i = 1, 2\}.
$$
A finite subset $S \subseteq \Sigma^*$ is called a \emph{prefix code} if no word in $S$ is a prefix of another word in $S$; a prefix code $S$ is called a \emph{complete prefix code} (CPC) if for every $w \in \Sigma^*$ with $|w| \geq \max\{|g|: g \in S\}$ there exists $g \in S$ such that $g$ is a prefix of $w$. A CPC forms a sort of cross-section of the tree such that each infinite path from the root intersects with the CPC at exactly one of its nodes.

\section{Properties of Complete Prefix Code}

In this section, we reveal properties of complete prefix code for the self-containedness of this paper. For more details, we refer the readers to \cite{MacKay-2003}.

Suppose $S$ is a CPC. Define
$$
R(S) = \{g \in \Sigma^*: g g' \in S \text{ for some } g' \in \Sigma^*\}
$$
be the initial finite subtree whose boundary is $S$.

\begin{proposition} \label{prop:CPC4_0}
Let $S$ be a CPC. For each $\overline{g} \in R(S)$, $S' := \partial A$ is a CPC and $R(S') = A$, where $A =\{h \in \Sigma^*: \overline{g} h \in R(S)\}$.
\end{proposition}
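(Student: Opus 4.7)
The plan is to unfold the definitions and verify three things in sequence: (i) $A$ is a finite prefix-closed subset of $\Sigma^*$; (ii) its boundary $S' = \partial A$ is a complete prefix code; and (iii) $R(S') = A$.

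First I would observe that $A$ inherits prefix-closedness from $R(S)$. Since $R(S)$ is the prefix closure of $S$, it is itself prefix-closed; hence if $\overline{g} h \in R(S)$ and $h'$ is a prefix of $h$, then $\overline{g} h'$ is a prefix of $\overline{g} h$ and therefore lies in $R(S)$, giving $h' \in A$. In particular $\epsilon \in A$ because $\overline{g} \in R(S)$ by hypothesis, and $A$ is finite because $R(S)$ is. The prefix-code property of $S'$ then follows immediately: if $h_1, h_2 \in \partial A$ with $h_1$ a proper prefix of $h_2$, then some $h_1 s_i$ is a prefix of $h_2 \in A$, forcing $h_1 s_i \in A$ by prefix-closedness, which contradicts $h_1 \in \partial A$.

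The main step is completeness of $S'$, and here I would first identify $\partial A$ explicitly by translating through $\overline{g}$: one checks that $h \in \partial A$ iff $\overline{g} h \in R(S)$ while $\overline{g} h s_i \notin R(S)$ for $i = 1, 2$, iff $\overline{g} h$ lies on the boundary of $R(S)$, i.e.\ $\overline{g} h \in S$. Thus $\overline{g} \cdot \partial A = \{g \in S : \overline{g} \text{ is a prefix of } g\}$. Setting $m = \max\{|h| : h \in \partial A\}$ and fixing $w \in \Sigma^*$ with $|w| \geq m$, I would take any infinite extension $\overline{w}$ of $w$; because $S$ is a CPC it forms a cross-section of the dyadic tree, so a unique $g \in S$ is a prefix of the infinite path $\overline{g}\, \overline{w}$. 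Either $g$ is a prefix of $\overline{g}$ or $\overline{g}$ is a prefix of $g$. In the first case, combining $\overline{g} \in R(S)$ with the prefix-code property of $S$ forces $g = \overline{g} \in S$, whence $A = \{\epsilon\}$, $S' = \{\epsilon\}$, and the CPC condition is trivial. In the second case, $g = \overline{g} h$ with $h \in \partial A$, and $h$ is a prefix of $\overline{w}$ of length at most $m \leq |w|$, hence a prefix of $w$.

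Finally, the inclusion $R(S') \subseteq A$ follows from prefix-closedness of $A$ together with $S' \subseteq A$. For the reverse inclusion, given $h \in A$, I would iteratively extend $h$ by one symbol while staying inside $A$; because $A$ is finite this process must terminate at some $h h' \in \partial A = S'$, placing $h$ in $R(S')$. The only delicate point in the whole argument is the completeness step, where the key trick is to pad $w$ into an infinite path so that the cross-section property of the CPC $S$ can be invoked.
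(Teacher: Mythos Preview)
Your proposal is correct and follows essentially the same route as the paper: both arguments pad the test word (you use an infinite extension, the paper appends $s_1^k$), invoke the CPC property of $S$ on the concatenated word $\overline{g}\,w\cdots$, and then translate the resulting prefix in $S$ back through $\overline{g}$ to land in $\partial A$. Your presentation is slightly more structured in that you first isolate the prefix-closedness of $A$ and explicitly identify $\overline{g}\cdot\partial A$ with $\{g\in S:\overline{g}\text{ is a prefix of }g\}$, whereas the paper leaves this implicit, but the underlying mechanism is the same.
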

\begin{proof}
First, $S'$ is a prefix code, for if otherwise, there exist some $g, g' \in S'$ such that $g \ne g'$ and $g'$ is a prefix of $g$ and thus $\overline{g} g' \in S$ is a prefix of $\overline{g} g \in S$, which contradicts that $S$ is a CPC. Herein, we refer to proper prefix as prefix unless otherwise stated, and $g$ is a proper prefix of $h$ means that $h = g g'$ for some $g' \neq \epsilon$.
    
Next, we show that $S'$ is a complete prefix code. That is, for each $g \in \Sigma^*$ with $\norm{g} \ge \max_{h \in S'} \norm{h}$, there exists some prefix $g' \in S'$ of $g$. Note that, for sufficiently large $k \in \mathbb{N}$, $\overline{g} g s_1^k$ has a prefix $r \in S$ by the assumption that $S$ is a CPC. In this case, we can show that $\norm{\overline{g} g} \ge \norm{r}$. For if $\norm{\overline{g} g} < \norm{r}$ along with that $r \in S = \partial R(S)$ and that $R(S)$ is prefix-closed, it implies that $\norm{g} < \max_{h \in S'} \norm{h}$. It therefore contradicts that $\norm{g} \ge \max_{h \in S'} \norm{h}$. Since $r \in S$, $r$ cannot be a prefix of $\overline{g}$. Therefore, $r = \overline{g} g'$ for some $g' \in A$. This indicates that $g'$ is a prefix of $g$. Since $\overline{g} g' s_i \notin R(S)$ for all $s_i \in \Sigma$, it follows that $g' \in S'$. 
    
Finally, it is left to show that $R(S') = A$. If $h \in R(S')$, then there exists some $h' \in S'$ where $h$ is a prefix of $h'$. Hence, $\overline{g} h \in R(S)$ is a prefix of $\overline{g} h' \in S$ and $h \in A$ as a consequence. If $h \in A$, then $\overline{g} h \in R(S)$ is a prefix of some $\overline{g} h' \in S = \partial R(S)$, i.e., $h' \in \partial R(S')$. Since $h$ is a prefix of $h'$, it arrives at $h \in R(S')$.
\end{proof}

Proposition \ref{prop:CPC4_0} demonstrates that if $S$ is a CPC and $\overline{g} \in R(S)$, then $\{g \in \Sigma^*: \overline{g} g \in S\}$ is also a CPC. In addition, let $S_{\overline{g}} = \{g \in S: \overline{g} \text{ is a prefix of } g\}$, we show that replacing $S_{\overline{g}}$ with any CPC remains a CPC.

\begin{proposition}
Let $S_1$, $S_2$ be CPCs, and $\overline{g} \in R(S_1)$. Suppose $S = S_1 \setminus \{\overline{g} h: h \in \Sigma^*\} \bigcup \overline{g} S_2$. Then,
\begin{enumerate}
		\item $S$ is a CPC.
        \item $R(S) = (R(S_1) \setminus \{\overline{g} h \in R(S_1)\}) \bigcup \overline{g}  R(S_2)$.
\end{enumerate}
\end{proposition}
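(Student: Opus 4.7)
The plan is to handle parts (1) and (2) separately, with (1) being the substantive one. For (1), I would verify the prefix code property and the completeness property in turn. The prefix code check is a case analysis: given $g\neq g'$ in $S$, I split on whether each lies in $S_1\setminus\{\overline{g}h:h\in\Sigma^*\}$ or in $\overline{g}S_2$. The two same-side cases reduce immediately to the prefix code property of $S_1$ or of $S_2$. The mixed case uses the key observation that no element of $S_1\setminus\{\overline{g}h:h\in\Sigma^*\}$ has $\overline{g}$ as a prefix, combined with the fact that since $\overline{g}\in R(S_1)$ some extension of $\overline{g}$ lies in $S_1$, which rules out any element of $S_1\setminus\{\overline{g}h:h\in\Sigma^*\}$ being a proper prefix of $\overline{g}$ and hence of anything in $\overline{g}S_2$.

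For the completeness of $S$, I would follow the template already used in the proof of Proposition \ref{prop:CPC4_0}: given $w$ with $|w|\geq\max_{g\in S}|g|$, pad with a long tail $s_1^k$ and use the CPC property of $S_1$ to obtain a prefix $r\in S_1$ of $ws_1^k$. If $r\notin\{\overline{g}h:h\in\Sigma^*\}$, then $r\in S$ and the length bound $|r|\leq\max_{g\in S}|g|\leq |w|$ forces $r$ to be a prefix of $w$ itself. If instead $r=\overline{g}r'$, then $|\overline{g}|\leq\max_{g\in S}|g|\leq|w|$ makes $\overline{g}$ a prefix of $w$; writing $w=\overline{g}w'$, the same length bound gives $|w'|\geq\max_{h\in S_2}|h|$, and the CPC property of $S_2$ yields some $h\in S_2$ that is a prefix of $w'$, so $\overline{g}h\in \overline{g}S_2\subseteq S$ is the desired prefix of $w$.

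For part (2), I would argue by double inclusion. The $(\supseteq)$ direction is direct: an element of $R(S_1)\setminus\{\overline{g}h\in R(S_1)\}$ has an extension in $S_1$ which, by the same comparability argument as above, cannot lie in $\{\overline{g}h:h\in\Sigma^*\}$ and hence lies in $S$; and any $\overline{g}h$ with $h\in R(S_2)$ extends to an element of $\overline{g}S_2\subseteq S$. For the $(\subseteq)$ direction, I take $g\in R(S)$ with an extension $gg''\in S$ and split on the two pieces of $S$. If $gg''\in S_1\setminus\{\overline{g}h:h\in\Sigma^*\}$, then $g\in R(S_1)$ and one checks $g$ cannot have $\overline{g}$ as a prefix (else $gg''$ would). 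If $gg''\in\overline{g}S_2$, comparing $g$ with $\overline{g}$ places $g$ on exactly one side of the claimed decomposition: $\overline{g}$ being a prefix of $g$ gives $g\in\overline{g}R(S_2)$, while $g$ being a proper prefix of $\overline{g}$ gives $g\in R(S_1)\setminus\{\overline{g}h\in R(S_1)\}$ thanks to $\overline{g}\in R(S_1)$.

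The only place where care is needed is the bookkeeping for the boundary cases $g=\epsilon$ and $g=\overline{g}$, which must be placed into $\overline{g}R(S_2)$ using $\epsilon\in R(S_2)$ (valid since $S_2$ is a nonempty CPC). These should be absorbed cleanly by the definitions but ought to be flagged explicitly to keep the case analysis watertight.
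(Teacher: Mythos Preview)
Your plan matches the paper's proof closely: the prefix-code check by case analysis on which piece of $S$ each element lies in, the completeness check via padding and splitting on whether $\overline{g}$ is involved, and the double inclusion for part (2) are all organized the same way. One step, however, does not go through as written. In the $(\supseteq)$ direction of part (2) you assert that any $g\in R(S_1)\setminus\{\overline{g}h\in R(S_1)\}$ has an extension in $S_1$ that ``by the same comparability argument'' must lie outside $\{\overline{g}h:h\in\Sigma^*\}$. Comparability alone does not give this: when $g$ is a proper prefix of $\overline{g}$ (so not only the special values $g=\epsilon$ and $g=\overline{g}$ you flag at the end), an extension $g'\in S_1$ of $g$ may well lie in $\overline{g}\Sigma^*$. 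For instance, with $S_1=\{s_1s_1,s_1s_2,s_2\}$, $\overline{g}=s_1$, and $g=\epsilon$, the extension $s_1s_1\in S_1$ of $g$ lies in $\overline{g}\Sigma^*$.

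The paper handles this by splitting on whether the chosen extension $g'\in S_1$ has $\overline{g}$ as a prefix. If not, then $g'\in S_1\setminus\{\overline{g}h\}\subseteq S$ and $g\in R(S)$. If so, then since $g$ and $\overline{g}$ are both prefixes of $g'$ and $\overline{g}$ is not a prefix of $g$, comparability forces $g$ to be a proper prefix of $\overline{g}$; now any element of $\overline{g}S_2\subseteq S$ extends $g$, so again $g\in R(S)$. With this adjustment (which you already carry out in mirror form in your $(\subseteq)$ direction) your argument is complete and coincides with the paper's.
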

\begin{proof}
	(1) Firstly, we show that $S$ is a prefix code by contradiction. That is, there exists some $g', g'' \in S$ with $g' \ne g''$ such that $g'$ is a prefix of $g''$. Then, it must lies in the following two cases:
    \begin{itemize}
    	\item $\overline{g}$ is a prefix of $g'$. Then, $g' = \overline{g} h'$ and $g'' = \overline{g} h''$ for some $h', h'' \in \Sigma^*$, where $h', h'' \in S_2$ and $h'$ is a prefix of $h''$. It contradicts that $S_2$ is a prefix code.
        \item $\overline{g}$ is not a prefix of $g'$. Then, $\overline{g}$ is not a prefix of $g''$. Hence, $g', g'' \in S_1$ contradicts that $S_1$ is a prefix code.
    \end{itemize}
    
    Next, we show that $S$ is a CPC. That is, for each $g \in \Sigma^*$ with $\norm{g} \ge \max_{h \in S} \norm{h}$, there exists some $g' \in S$ such that $g'$ is a prefix of $g$. It can be verified by considering the following two cases:
    \begin{itemize}
    	\item $\overline{g}$ is a prefix of $g$. Then, $g = \overline{g} h'$, where $\norm{\overline{g}} + \norm{h'} = \norm{g} \ge \max_{h \in S} \norm{h} \ge \norm{\overline{g}} + \max_{h \in S_2} \norm{h}$.  Thus, there exists some $h'' \in S_2$ such that $h''$ is a prefix of $h'$ and thus $g':=\overline{g} h'' \in S$ is a prefix of $g$.
        \item $\overline{g}$ is not a prefix of $g$. Then for $s_1 \in \Sigma$ and sufficiently large $k \in \mathbb{N}$, $g s_1^k$ has a prefix $g' \in S_1$. In this case, $g' \in S$. Furthermore, $g'$ is a prefix of $g$, for if otherwise, $\norm{g'} > \norm{g} \ge \max_{h \in S} \norm{h}$ which results in a contradiction.
    \end{itemize}
    
    (2) For convenience, denote $A = (R(S_1) \setminus \{\overline{g} h \in R(S_1)\}) \bigcup \overline{g}  R(S_2)$. 
    
    If $g \in R(S)$, then it must be in one of the following two cases:
    \begin{itemize}
    	\item $\overline{g}$ is a prefix of $g$. In this case, there is some $g' \in \overline{g} S_2$ such that $g$ is the prefix of $g'$ and thus $g \in \overline{g} R(S_2) \subset A$.
        \item $\overline{g}$ is not a prefix of $g$, then $g$ is a prefix of some $g' \in S_1 \setminus \{\overline{g} h: h \in \Sigma^*\}$. Thus, $g \in R(S_1)$ and $g \notin \overline{g} R(S_2)$. Hence, $g \in A$.
    \end{itemize}
    
    On the other hand, if $g \in A$, then it must be in one of the following two cases:
    \begin{itemize}
    	\item $g \in \overline{g} R(S_2)$. Then, $g \in R(S)$ naturally.
        \item $g \in R(S_1) \setminus \{\overline{g} h \in R(S_1)\}$. Then, $g$ is a prefix of some $g' \in R(S_1)$. If $\overline{g}$ is not a prefix of $g'$, then $g' \in R(S_1 \setminus \{\overline{g} h \in R(S_1)\}) \subset R(S)$. If $\overline{g}$ is a prefix of $g'$, then there exists some $g' \in \overline{g} S_2$ such that $g$ is a prefix of $g'$ and thus $g \in R(S)$.
    \end{itemize}
    
The discussion above leads to that $R(S) \subset A$ and $A \subset R(S)$, which completes the proof.
\end{proof}

\begin{proposition} \label{prop:CPC4}
	Let $S_1$, $S_2$ be CPCs, $\overline{g} \in R(S_1)$. Define $P=(R(S_1) \setminus \{\overline{g} h \in R(S_1)\}) \bigcup \{\overline{g} h: h \in R(S_2)\}$. Then,
	\begin{enumerate}
		\item $P$ is prefix-closed. \label{prop:CPC4-1}
		\item $\partial P$ is a CPC. \label{prop:CPC4-2}
	\end{enumerate}
\end{proposition}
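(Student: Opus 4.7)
The plan is to derive Proposition \ref{prop:CPC4} directly from the preceding proposition, since the set $P$ defined here is, up to notation, the same object that appears there. Specifically, I would set $S := (S_1 \setminus \{\overline{g}h : h \in \Sigma^*\}) \cup \overline{g}S_2$. The preceding proposition simultaneously gives me that $S$ is a CPC and that $R(S) = (R(S_1) \setminus \{\overline{g}h \in R(S_1)\}) \cup \overline{g}R(S_2)$. Since $\overline{g}R(S_2)$ is literally the set $\{\overline{g}h : h \in R(S_2)\}$, the latter expression coincides with $P$, so $P = R(S)$.

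With this identification in hand, both parts become immediate from the definitions. For (1), I would invoke $R(S) = \{g \in \Sigma^* : gg' \in S \text{ for some } g' \in \Sigma^*\}$: this set is precisely the collection of prefixes of elements of $S$ (taking $g' = \epsilon$ covers $S$ itself, and nontrivial $g'$ covers proper prefixes), hence automatically prefix-closed. For (2), the very definition of $R(\cdot)$ as the initial finite subtree whose boundary is $S$ forces $\partial R(S) = S$, and the preceding proposition has already established that $S$ is a CPC; thus $\partial P = \partial R(S) = S$ is a CPC.

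The only real obstacle, if it deserves the name, is the syntactic verification that $P$ and $R(S)$ agree as subsets of $\Sigma^*$, which reduces to the tautological identification $\{\overline{g}h : h \in R(S_2)\} = \overline{g}R(S_2)$ and the parsing of $\{\overline{g}h \in R(S_1)\}$ as $\{\overline{g}h : h \in \Sigma^*\} \cap R(S_1)$. I therefore expect this proposition to function as a convenient reformulation of its predecessor, with no new technical content beyond unwinding the definitions of $R(\cdot)$ and $\partial(\cdot)$. If one preferred a direct argument bypassing the preceding proposition, (1) would follow from the elementary observation that $\overline{g} \preceq g' \preceq g$ implies $\overline{g} \preceq g$ (so the ``$\overline{g}$-not-a-prefix'' condition in the first piece of $P$ is itself prefix-closed), and (2) would then proceed by a direct length-counting argument analogous to the one used in Proposition \ref{prop:CPC4_0}; but routing through the preceding proposition is strictly cleaner.
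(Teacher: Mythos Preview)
Your argument is correct and takes a genuinely different, more economical route than the paper. The paper proves both parts by direct case analysis: for (\ref{prop:CPC4-1}) it splits on which piece of the union a given element lies in and, for elements of the form $\overline{g}h$, further splits on whether a given prefix extends $\overline{g}$; for (\ref{prop:CPC4-2}) it first checks the prefix-code property of $\partial P$ directly, and then verifies completeness by another case split on whether $\overline{g}$ is a prefix of the test word $g$, with a sub-split on $|g|$ against $\max_{h\in R(S_1)}|h|$. Your approach sidesteps all of this by recognising that $P$ is literally $R(S)$ for the CPC $S$ constructed in the preceding proposition, after which (\ref{prop:CPC4-1}) is the trivial fact that $R(\cdot)$ is always prefix-closed and (\ref{prop:CPC4-2}) is the equally trivial fact that $\partial R(S)=S$ for any prefix code $S$. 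What your approach buys is brevity and a clear conceptual link between the two propositions; what the paper's direct argument buys is independence from the preceding proposition (so the proof would survive a reordering of the section). The only point worth making explicit in your write-up is the one-line verification that $\partial R(S)=S$ holds for any prefix code $S$, rather than appealing to the informal phrase ``initial finite subtree whose boundary is $S$'' as if it were a theorem.
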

\begin{proof}
	({\ref{prop:CPC4-1}}) If $g = g_1 \ldots g_{k-1} g_k \in R(S_1) \setminus \{\overline{g} h \in R(S_1) \}$, then for any prefix $g':=g_1 \ldots g_{m}$ of $g$ for some $m < k$, it is clear that $g' \in R(S_1)$ since $R(S_1)$ is prefix-closed. On the other hand, $g' \notin \{\overline{g} h \in R(S_1) \}$, for if otherwise, $g'\in \{\overline{g} h \in R(S_1) \}$ and so $g \in \{\overline{g} h \in R(S_1) \}$. Hence, $g'\in R(S_1) \setminus \{\overline{g} h \in R(S_1) \} \subset P$. 
    
	If $g \in \{\overline{g} h \in \Sigma^*: h \in R(S_2) \}$, denote $g=g_1 \ldots g_k = \overline{g} h'$ for some $h' \in \Sigma^*$. Then, any prefix $g'$ of $g$ must lie in one of the following two cases:
	\begin{itemize}
		\item $\overline{g}$ is a prefix of $g'$. Then, $g' = \overline{g} h''$ for some $h'' \in \Sigma^*$. Therefore, $h''$ is a prefix of $h'$ and $g' \in \{\overline{g} h \in \Sigma^*: h \in R(S_2) \} \subset P$.
		\item $\overline{g}$ is not a prefix of $g'$. Then, $g' \in R(S_1) \setminus \{\overline{g} h \in R(S_1) \} \subset P$ naturally.
	\end{itemize}
	({\ref{prop:CPC4-2}}) It can be shown that $\partial P$ is a prefix code. Otherwise, there are $g', g'' \in \partial P$ with $g' \ne g''$ yet $g''=g' g_1 \ldots g_k$ for some $g_i \in \Sigma$. Hence, $g' \notin \partial P$.
	
	Next, we show that $\partial P$ is a CPC. Suppose that $g \in \Sigma^*$ with $\norm{g} \ge \max_{h \in \partial P} \norm{h}$. There are two cases: 
	\begin{itemize}
		\item $\overline{g}$ is a prefix of $g$. Then, $g=\overline{g} h'$ for certain $h' \in \Sigma^*$. In this case $\norm{g} = \norm{\overline{g}} +\norm{h'} \ge \max_{h \in \partial P} \norm{h} \ge \norm{\overline{g}}+\max_{h \in S_2} \norm{h}$. This implies that there is some $w \in S_2$ with $w s_i \notin R(S_2)$ for all $s_i \in \Sigma$ such that $w$ is a prefix of $h'$. In this case, $\overline{g} w \in P$ and $\overline{g} w s_i \notin P$ for all $s_i \in \Sigma$. Hence, $\overline{g} w \in \partial P$ is a prefix of $g = \overline{g} h'$.
		\item $\overline{g}$ is not a prefix of $g$. 
		\begin{itemize}
			\item If $\norm{g} \ge \max_{h \in R(S_1)} \norm{h}$, then there exists $g' \in S_1$ such that $g'$ is a prefix of $g$ and $g' s_i \notin R(S_1)$. Note that $\overline{g}$ is not a prefix of $g$ and thus is not a prefix of $g'$ or $g' s_i$ for all $s_i \in \Sigma$, since $\norm{g} \ge \norm{g'}$. Hence, $g' \in P$ and $g' s_i \notin P$ for each $s_i \in \Sigma$. 
			\item If $\max_{h \in \partial P} \norm{h} \le \norm{g} < \max_{h \in R(S_1)} \norm{h}$, then there exists some $k>0$ such that $g s_i^k$ has a prefix $g' \in S_1$. In this case, $g'$ is a prefix of $g$. Otherwise, $g'=g s_i^m$ for some $m$ with $1 \le m \le k$ and $g' \in S_1$ implies $g s_i \in R(S_1) \bigcap P$ and thus $\norm{g} < \max_{h \in \partial P} \norm{h}$. Hence, $g' s_i \notin R(S_1)$. We have derived that $g' \in P$ and $g' s_i \notin P$ for each $s_i \in \Sigma$.
		\end{itemize}
	\end{itemize}
\end{proof}

\begin{proposition} \label{prop:CPC5}
Let $S, P$ be CPCs. Then $S P$ is a CPC.
\end{proposition}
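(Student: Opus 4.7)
The plan is to unpack the concatenation $SP = \{sp : s \in S,\, p \in P\}$ and verify directly the two defining properties of a complete prefix code, using the same length-based bookkeeping arguments that appeared in Propositions \ref{prop:CPC4_0} and \ref{prop:CPC4}. Throughout, let $M_S = \max_{s \in S}\norm{s}$ and $M_P = \max_{p \in P}\norm{p}$, so that $\max_{h \in SP}\norm{h} = M_S + M_P$.

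First I would show that $SP$ is a prefix code by contradiction. Suppose $s_1 p_1, s_2 p_2 \in SP$ with $s_1 p_1$ a proper prefix of $s_2 p_2$. Both $s_1$ and $s_2$ are then prefixes of $s_2 p_2$, so whichever of them is shorter must be a prefix of the longer. Since $S$ is a prefix code, this forces $s_1 = s_2$. Cancelling the common $S$-part, $p_1$ becomes a prefix of $p_2$, which by the prefix-code property of $P$ gives $p_1 = p_2$, contradicting $s_1 p_1 \neq s_2 p_2$.

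Next I would verify completeness. Take any $g \in \Sigma^*$ with $\norm{g} \ge M_S + M_P$. Since $\norm{g} \ge M_S$ and $S$ is a CPC, there exists $s \in S$ which is a prefix of $g$; write $g = s g'$. Then
\[
\norm{g'} \;=\; \norm{g} - \norm{s} \;\ge\; (M_S + M_P) - M_S \;=\; M_P,
\]
so by completeness of $P$ there is some $p \in P$ which is a prefix of $g'$. Consequently $sp \in SP$ is a prefix of $g$, as required.

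Neither step should present a real obstacle; the only subtlety is ensuring that the uniform length bound on $g$ is enough to guarantee both a prefix in $S$ and, after truncation, a prefix in $P$. This is exactly what the estimate $\norm{g'} \ge M_P$ above secures. Taken together with the prefix-code argument, this establishes that $SP$ is a CPC.
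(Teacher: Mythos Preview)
Your proof is correct and follows essentially the same approach as the paper: both verify the prefix-code property via comparability of the $S$-prefixes, and both establish completeness by first extracting an $S$-prefix of $g$ and then an $P$-prefix of the remainder. Your completeness step is in fact slightly more direct than the paper's, which pads the remainder $r$ with $s_1^k$ before invoking completeness of $P$ and then argues by contradiction that the resulting $P$-word cannot overshoot $r$; your length estimate $\norm{g'} \ge M_P$ bypasses that detour.
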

\begin{proof}
	Firstly, we claim that $S P$ is a prefix code. Otherwise, there exist $g, h \in S P$, such $h \ne g$ is a prefix of $g$, where $g = g' g''$, $h = h' h''$, $g', h' \in S$, and $g'', h'' \in P$. In this case, $h'$ is a prefix of $g' g''$, and thus either $h'$ is a prefix of $g'$ or $g'$ is a prefix of $h'$. It implies $g' = h'$ for the reason that $g', h' \in S$ and $S$ is a CPC. Hence, $h''$ is a prefix of $g''$. Since $g'', h'' \in P$ and $P$ is a CPC, $g''=h''$. This contradicts that $g \ne h$.
    
	Now we claim that $S P$ is a CPC. That is, given any $g \in \Sigma^*$ with $\norm{g} \ge \max_{h \in S P} \norm{h}$, $g$ has a prefix $g' \in S$. Since $\norm{g} \ge \max_{h \in S P} \norm{h} \ge \max_{h \in S} \norm{h}$, we may denote $g = g' r$ for some $g' \in S$, $r \in \Sigma^*$. Then for some sufficiently large $k  \in \mathbb{N}$ such that $\norm{r s_1^k} \ge \max_{h \in P} \norm{h}$, $r s_1^{k}$ has a prefix $h' \in P$. In this case, either $h'$ is a prefix of $r$ or $r$ is a prefix of $h'$. Note that if $h'$ is not a prefix of $r$, then $\norm{g' r} < \norm{g'h'} \le \max_{h \in S P} \norm{h}$, which contradicts that $\norm{g} = \norm{g' r} \ge \max_{h \in S P} \norm{h}$.
\end{proof}

\section{Irreducible on Complete Prefix Code}

Suppose $X$ is a tree shift of finite type. Then there exists $\mathcal{F} \subset \mathcal{A}^{\Delta_n}$ such that $X = \mathsf{X}_{\mathcal{F}}$ for some $n \in \mathbb{N}$. Ban and Chang \cite{BC-TAMS2017} showed that there exist $\mathcal{A}'$ and $\mathcal{F}' \subset (\mathcal{A}')^{\Delta_1}$ such that $\mathsf{X}_{\mathcal{F}}$ is topologically conjugate with $\mathsf{X}_{\mathcal{F}'}$, which is analogous to the classical result. For the rest of this elucidation, the forbidden set $\mathcal{F}$ of a TSFT $X = \mathsf{X}_{\mathcal{F}}$ is referred to as a subset of $\mathcal{A}^{\Delta_1}$ unless stated otherwise; such an $\mathcal{F}$ is ``maximal'' in the sense that every pattern $u \in \mathcal{A}^{\Delta_1} \setminus \mathcal{F}$ is extensible. In addition, we say that $X$ is induced by an allowable set $B = \mathcal{A}^{\Delta_1} \setminus \mathcal{F}$.

\begin{proposition} \label{prop:extensible}
Let $X$ be a TSFT induced by some allowable set $B \subseteq \mathcal{A}^{\Delta_1}$. Suppose $u$ is a pattern with following properties:
\begin{itemize}
	\item $\partial s(u)$ is a CPC.
	\item If $g \Delta_1 \subseteq s(u)$, $\edcedge{u_g}{u_{g s_1}}{u_{g s_2}} \in B$.
\end{itemize}
Then, there exists an $x \in X$ such that $x|_{s(u)}=u$. That is, $u$ is an allowable pattern in $X$.
\end{proposition}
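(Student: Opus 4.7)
The plan is to build $x \in X$ by keeping $x|_{s(u)} = u$ and attaching a suitable subtree below each boundary node. Because $\partial s(u)$ is a CPC and $s(u) = R(\partial s(u))$, every $w \in \Sigma^*$ falls into exactly one of the following cases: either $w \in s(u)$, or $w = g h$ for some (necessarily unique) $g \in \partial s(u)$ and $h \in \Sigma^*$; these two descriptions agree precisely when $w \in \partial s(u)$. So it is enough to produce, for each $g \in \partial s(u)$, a tree $y^g \in X$ with $y^g_\epsilon = u_g$, and then to define $x_w := u_w$ on $s(u)$ and $x_{g h} := y^g_h$ below each boundary node.

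The principal step is the construction of $y^g$. Given $g \in \partial s(u)$ with parent $\overline{g}$, I first observe that $\overline{g} \Delta_1 \subseteq s(u)$: since $\overline{g} \in R(\partial s(u)) \setminus \partial s(u)$, $\overline{g}$ is a strict prefix of some element of $\partial s(u)$, and the CPC property of $\partial s(u)$ forces long enough words through the sibling of $g$ to hit $\partial s(u)$ as well, so the sibling also lies in $R(\partial s(u)) = s(u)$. The hypothesis on $u$ then gives $\edcedge{u_{\overline{g}}}{u_{\overline{g} s_1}}{u_{\overline{g} s_2}} \in B$. By maximality of $\mathcal{F}$, this 1-block is extensible, so there exists $z \in X$ whose restriction to $\Delta_1$ is exactly this block. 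Writing $g = \overline{g} s_i$ and using shift-invariance of tree shifts, I set $y^g := \sigma_{s_i} z \in X$, which satisfies $y^g_\epsilon = z_{s_i} = u_g$.

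With $x$ defined as above, it remains to check that $\edcedge{x_w}{x_{w s_1}}{x_{w s_2}} \in B$ for every $w \in \Sigma^*$. If $w \in s(u) \setminus \partial s(u)$ then the previous paragraph's argument shows $w \Delta_1 \subseteq s(u)$, and the hypothesis on $u$ gives the desired membership in $B$. If $w = g h$ with $g \in \partial s(u)$ and $h \in \Sigma^*$, then the three labels $x_w, x_{w s_1}, x_{w s_2}$ coincide with $y^g_h, y^g_{h s_1}, y^g_{h s_2}$, whose 1-block lies in $B$ because $y^g \in X$; the case $w \in \partial s(u)$ is the subcase $h = \epsilon$ and is covered here, with the consistency $y^g_\epsilon = u_g$ guaranteeing that $x$ is well-defined at the overlap.

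The main obstacle is the sibling-node observation in the second paragraph, which is the only place where the CPC hypothesis genuinely enters the construction; everything else is bookkeeping. A minor edge case is $s(u) = \{\epsilon\}$, where there is no internal node available to invoke the hypothesis on; this is absorbed into the standing convention that $\mathcal{F}$ is maximal, so that every symbol appearing in $\mathcal{A}$ is the root label of some element of $X$.
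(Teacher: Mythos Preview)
Your proof is correct and takes a genuinely different route from the paper's. The paper extends $u$ one level at a time: it builds a nested sequence $u^{(M)} = u \subset u^{(M+1)} \subset \cdots$ by, at each step, using the essential-graph assumption to choose for every boundary node $g'$ some block $\edcedge{u^{(k)}_{g'}}{\alpha_{g',s_1}}{\alpha_{g',s_2}} \in B$, and then takes $x$ as the limit. You instead use the standing convention that every $1$-block in $B$ is extensible to a full element of $X$ once, to produce for each boundary node $g$ a whole subtree $y^g \in X$ with the correct root label, and glue these onto $u$ in a single stroke. Your approach is shorter and makes the role of the CPC hypothesis more transparent (it is exactly what forces both children of every non-boundary node of $s(u)$ to lie in $s(u)$), while the paper's inductive construction is more self-contained in that it does not outsource the passage from ``every symbol has an outgoing $1$-block'' to ``every $1$-block extends to a full configuration''; that passage is precisely what the layer-by-layer induction accomplishes. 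One small remark: the detour through the parent $\overline{g}$ to manufacture $y^g$ is unnecessary---the same essential-graph assumption you invoke for the edge case $s(u)=\{\epsilon\}$ already gives, for every symbol $u_g$, a $1$-block in $B$ rooted at $u_g$ and hence (by extensibility) a $y^g \in X$ with $y^g_\epsilon = u_g$ directly.
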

\begin{proof}
Suppose $M:=\min_{h \in \partial s(u)} \norm{h}$. Let $u^{(M)}:=u$. There exists a sequence of patterns $\{u^{(n)}\}_{n=M}^{\infty}$ such that
\begin{itemize}
	\item $n=\min_{h \in \partial s(u^{(n)})} \norm{h}$.
	\item $u^{(n)}|_{u^{(n-1)}} = u^{(n-1)}$ if $n>M$.
	\item $\partial s(u^{(n)})$ is a CPC.
	\item If $g \Delta_1 \subseteq s(u^{(n)})$, $\edcedge{u^{(n)}_g}{u^{(n)}_{g s_1}}{u^{(n)}_{g s_2}} \in B$.
\end{itemize}
This can be proved by construction on existence of $u^{(k+1)}$ given the finite sequence $\{u^{(n)}\}_{n=M}^{k}$ for all $k \ge M$.

When $k=M$, such properties are held by definition of $u^{(M)}$. 

Suppose the claim holds for $k$, the case $k+1$ can also be verified by induction hypothesis. Note that $\partial s(u^{(k)}) \Sigma$ is a CPC by Proposition \ref{prop:CPC5}, and
\begin{align*}
s(u^{(k+1)}) := R(\partial s(u^{(k)}) \Sigma) = s(u^{(k)}) \bigcup \partial  s(u^{(k)}) \Sigma.
\end{align*}
Alternatively, to construct $u^{(k+1)}$, it is sufficient to determine the pattern on $\partial s(u^{(k)}) \Sigma$. On the other hand, for each $g' \in \partial s(u^{(k)})$ there exists some $\edcedge{u^{(k)}_{g'}}{\alpha_{g', s_1}}{\alpha_{g', s_2}} \in B$ according to the assumption of essential graph representation. In this case, define $u^{(k+1)}$ as follows:
		\[u^{(k+1)}_g:=\begin{cases}
		u^{(k)}_g & \text{if } g \in s(u^{(k)}) \\
		\alpha_{g', s_i} & \text{if } g = g' s_i, s_i \in \Sigma \\
		\end{cases}\]
Then, $\min_{h \in \partial s(u^{(k+1)})} \norm{h} = 1+\min_{h \in \partial s(u^{(k)})} \norm{h}$, and $\partial s(u^{(k+1)})$ is a CPC by Proposition \ref{prop:CPC5}.

If $g \in \Sigma^*$ with $g \Delta_1 \subseteq s(u^{(k+1)})$, there are two cases:
\begin{itemize}
	\item $g \Delta_1 \subseteq s(u^{(k)})$. Then $\edcedge{u^{(k+1)}_g}{u^{(k+1)}_{g s_1}}{u^{(k+1)}_{g s_2}} = \edcedge{u^{(k)}_g}{u^{(k)}_{g s_1}}{u^{(k)}_{g s_2}} \in B$.
	\item $g \Delta_1 \nsubseteq s(u^{(k)})$. Then $g \in \partial s(u^{(k)})$ and $\edcedge{u^{(k+1)}_g}{u^{(k+1)}_{g s_1}}{u^{(k+1)}_{g s_2}} = \edcedge{u^{(k)}_g}{\alpha_{g, s_i}}{\alpha_{g, s_2}} \in B$.
\end{itemize}
Hence, the four properties are satisfied for the case of $k+1$.

By mathematical induction, the assertion holds for all $k \in \mathbb{N}$.

In this case, for $0 \le n < M$, define $u^{(n)}:=u^{(M)}|_{\Delta_{n-1}}$. Then, $\left\{ u^{(k)}\right\}_{k=1}^{\infty}$ determines an $x \in X$ by $x_g = u^{(\norm{g})}_g$ and $x_\epsilon = \alpha$. This completes the proof.
\end{proof}

A tree shift $X$ is called \emph{irreducible on complete prefix code} (CPC-irreducible) if for each pair of blocks $u, v \in B_n(X)$, there is an $x \in X$ and a complete prefix code $P \subset \bigcup_{k > n} \Sigma^k$ such that $u$ is a subtree of $x$ rooted at $\epsilon$ and $v$ is a subtree of $x$ rooted at $g$ for all $g \in P$, where $B_n(X)$ denotes the set of $n$-blocks of $X$. In other words, $x|_{\Delta_n} = u$ and $x|_{g \Delta_n} = v$ for each $g \in P$. CPC-irreducibility is defined by Aubrun and B\'{e}al and is named irreducible in \cite{AB-TCS2012, AB-TCS2013}. They extended the Williams' Classification Theorem to CPC-irreducible tree shifts of finite type. In addition, there exists a CPC-irreducible sofic tree shift which is not a factor of a CPC-irreducible TSFT.

Let $X$ be a tree shift and $x \in X$. We say that $x$ is \emph{strongly periodic} if the orbit $\{\sigma_g x\}_{g \in \Sigma^*}$ is finite. If there exists a CPC $P$ such that $\sigma_g x = x$ for each $g \in P$, then $x$ is a \emph{CPC-periodic point}. It is obvious that a CPC-periodic point is strongly periodic. Ban and Chang \cite{BC-TAMS2017} demonstrated that every CPC-irreducible TSFT has dense CPC-periodic points, which concludes that strongly periodic points are dense in CPC-irreducible TSFTs. Furthermore, they addressed the following equivalent statements of CPC-irreducibility. 

\begin{theorem}[See \cite{BC-TAMS2017}] \label{thm:equiv-def-irr}
Suppose $X$ is a tree shift. The following are equivalent.
\begin{enumerate}[\bf (i)]
\item $X$ is CPC-irreducible.
\item For each pair of blocks $u \in B_n(X), v \in B_m(X)$ there exists a collection of CPCs $\{P_w\}_{w \in \Sigma^n}$ and $x \in X$ such that
$$
x|_{\Delta_n} = u \quad \text{and} \quad x|_{w g \Delta_m} = v \text{ for all } w \in \Sigma^n, g \in P_w.
$$
\item For each pair of blocks $u \in B_n(X), v \in B_m(X)$ there exists a collection of CPCs $\{P_k\}_{1 \leq k \leq l}$ and $x \in X$ such that $x|_{\Delta_n} = u$ and, for each $w \in \Sigma^n$ there exists $1 \leq k \leq l$ such that $x|_{w g \Delta_m} = v$ for all $g \in P_k.$
\end{enumerate}
\end{theorem}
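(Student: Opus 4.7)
The plan is to establish (ii) $\Leftrightarrow$ (iii) as routine reindexing and (i) $\Leftrightarrow$ (ii) as the substantive content. For (ii) $\Rightarrow$ (iii), I would enumerate the finitely many distinct CPCs in $\{P_w\}_{w \in \Sigma^n}$ as $\{P_k\}_{1 \le k \le l}$ and take $k(w)$ to be the index of $P_w$. Conversely, for (iii) $\Rightarrow$ (ii), I set $P_w := P_{k(w)}$ for each $w \in \Sigma^n$.

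For (i) $\Rightarrow$ (ii), my first step is to reduce to the case $n = m$. Assuming without loss of generality $n \le m$, I use Proposition~\ref{prop:extensible} to extend $u$ to some $u' \in B_m(X)$ with $u'|_{\Delta_n} = u$ (apply the proposition to $u$ to obtain a full configuration in $X$, then restrict it to $\Delta_m$). Applying (i) to the $m$-blocks $u'$ and $v$ then yields $x \in X$ and a CPC $P \subset \bigcup_{k > m}\Sigma^k$ with $x|_{\Delta_m} = u'$ and $x|_{g\Delta_m} = v$ for every $g \in P$. Since every $g \in P$ satisfies $|g| > m \ge n$, each $w \in \Sigma^n$ is a proper prefix of some element of $P$, i.e., $w \in R(P)$, and Proposition~\ref{prop:CPC4_0} makes $P_w := \{h : wh \in P\}$ a CPC; the required identity $x|_{wh\Delta_m} = v$ for $h \in P_w$ is immediate from $wh \in P$.

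For (ii) $\Rightarrow$ (i), I take $u, v \in B_n(X)$ and apply (ii) with $m = n$ to obtain $x$ and $\{P_w\}_{w \in \Sigma^n}$; I then set $P := \bigcup_{w \in \Sigma^n} w P_w$. Its CPC-property I argue in the style of Proposition~\ref{prop:CPC5}: for distinct $w, w' \in \Sigma^n$ the elements of $w P_w$ and $w' P_{w'}$ disagree in their first $n$ symbols and hence cannot be prefix-related, while within each $w P_w$ the prefix-code property is inherited from $P_w$; completeness follows because any sufficiently long word factors as $wh$ with $w \in \Sigma^n$ and $h$ long enough to contain a prefix in $P_w$. By construction $x|_{g\Delta_n} = v$ for each $g \in P$.

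The main subtlety I anticipate lies in ensuring $P \subset \bigcup_{k > n}\Sigma^k$, which can fail only when some $P_w = \{\epsilon\}$ (the only CPC containing $\epsilon$), forcing the length-$n$ word $w$ into $P$. To circumvent this, I observe that $P_w = \{\epsilon\}$ forces $x|_{w\Delta_n} = v$; re-applying (ii) to the pair $(v, v)$ produces $y \in X$ and a collection $\{Q_{w'}\}_{w' \in \Sigma^n}$, which I paste into the subtree of $x$ rooted at $w$, the paste being consistent because $y|_{\Delta_n} = v = x|_{w\Delta_n}$. The resulting $x' \in X$ retains $u$ on $\Delta_n$, retains the copies of $v$ under every $w'' \neq w$, and produces copies of $v$ under $w$ at depths $\ge 2n > n$; the offending $\{\epsilon\}$ in $P_w$ is thereby replaced by the deeper CPC $\bigcup_{w' \in \Sigma^n} w' Q_{w'}$, and the combined $P$ sits inside $\bigcup_{k > n}\Sigma^k$ as required.
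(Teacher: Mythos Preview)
The paper does not give its own proof of this theorem; it is quoted from \cite{BC-TAMS2017}, so there is nothing in the present paper to compare your argument against. Your outline is essentially correct and would reconstruct such a proof, but two points need adjustment.

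First, a minor one: the extension of $u \in B_n(X)$ to $u' \in B_m(X)$ should not be justified via Proposition~\ref{prop:extensible}, which is stated only for TSFTs, whereas the theorem is for arbitrary tree shifts. The step is actually trivial: since $u$ is a block of $X$ it appears in some $x \in X$ (by shift-invariance at the root), and $u' := x|_{\Delta_m}$ does the job.

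Second, a genuine gap: in your (ii)$\Rightarrow$(i) argument the paste of $y$ into $x$ at the node $w$ need not lie in $X$ for a general tree shift. The agreement $x|_{w\Delta_n} = y|_{\Delta_n}$ only guarantees that blocks of height at most $n+1$ rooted above $w$ are unchanged; a forbidden block of larger height straddling $w$ can be created by the paste. (For instance, with $\mathcal{A}=\{0,1\}$, a single forbidden $2$-block, and an overlap of depth $n=0$, one can produce the forbidden block at the root after pasting.) The clean fix is to sidestep the $P_w = \{\epsilon\}$ case entirely: before invoking (ii), extend $u$ to some $u' \in B_{n+1}(X)$ and apply (ii) to the pair $(u', v)$. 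You then obtain $\{P_w\}_{w \in \Sigma^{n+1}}$, and $P := \bigcup_{w \in \Sigma^{n+1}} w P_w$ sits inside $\bigcup_{k \ge n+1}\Sigma^k \subset \bigcup_{k>n}\Sigma^k$ automatically, even when some $P_w = \{\epsilon\}$; your argument that $P$ is a CPC goes through unchanged.
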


Whenever $X$ is a $1$-step TSFT, the CPC-irreducibility can be rephrased more elegantly. Note that $X$ is a $1$-step TSFT if $X = \mathsf{X}_{\mathcal{F}}$ for some $\mathcal{F} \subset \mathcal{A}^{\Delta_1}$.

\begin{lemma} \label{lem:CPC_irr}
Let $X$ be a $1$-step TSFT. Then $X$ is CPC-irreducible if and only if for every $\alpha, \beta \in \mathcal{A}$ there exists a CPC $S$ and $x \in X$ such that $x_\epsilon = \alpha$ and $x_g = \beta$ for each $g \in S$.
\end{lemma}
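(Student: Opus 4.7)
For the forward direction, the plan is to apply the definition of CPC-irreducibility directly with $n = 0$. Interpreting $\alpha$ and $\beta$ as $0$-blocks in $B_0(X)$, CPC-irreducibility supplies an $x \in X$ and a CPC $P \subset \bigcup_{k \geq 1} \Sigma^k$ with $x|_{\Delta_0} = \alpha$ (so $x_\epsilon = \alpha$) and $x|_{g \Delta_0} = \beta$ (so $x_g = \beta$) for every $g \in P$, so $S := P$ is the desired CPC.

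For the backward direction, I fix $u \in B_n(X)$, $v \in B_m(X)$ and plan to verify condition (ii) of Theorem \ref{thm:equiv-def-irr}. For each leaf $w \in \Sigma^n$ of $u$, the hypothesis applied with $\alpha = u_w$ and $\beta = v_\epsilon$ produces a CPC $S_w$ and a point $x^{(w)} \in X$ such that $x^{(w)}_\epsilon = u_w$ and $x^{(w)}_g = v_\epsilon$ for every $g \in S_w$. I then glue these together into a finite pattern $u^{\ast}$ supported on the prefix-closed region
\[
T \;=\; \Delta_n \;\cup\; \bigcup_{w \in \Sigma^n} w\, R(S_w) \;\cup\; \bigcup_{w \in \Sigma^n,\; g \in S_w} wg\, \Delta_m,
\]
defined by $u^{\ast}|_{\Delta_n} = u$, $u^{\ast}_{wh} = x^{(w)}_h$ for $w \in \Sigma^n$ and $h \in R(S_w)$, and $u^{\ast}|_{wg \Delta_m} = v$ for $g \in S_w$. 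These three pieces overlap only at the nodes $w$ (where they share $u_w = x^{(w)}_\epsilon$) and at the nodes $wg$ (where $x^{(w)}_g = v_\epsilon$), so $u^{\ast}$ is well defined.

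The bulk of the work is to verify the two hypotheses of Proposition \ref{prop:extensible} and thereby extract $y \in X$ with $y|_T = u^{\ast}$. For the first hypothesis, $\partial T = \{wgh : w \in \Sigma^n,\ g \in S_w,\ h \in \partial \Delta_m\}$ is a CPC: iterating Proposition \ref{prop:CPC4} along the leaves of $\Sigma^n$ to splice in each $S_w$ shows that $\bigcup_{w \in \Sigma^n} w S_w$ is a CPC, after which Proposition \ref{prop:CPC5} applied with the CPC $\Sigma^m$ yields the conclusion. For the second hypothesis, every internal $1$-block of $u^{\ast}$ lies in the allowable set $B$: those internal to $u$, $v$, or some $x^{(w)}$ are allowable by hypothesis, and the only ``seams'' occur at nodes $w$ and $wg$, where the relevant $1$-block inherits all three entries from $x^{(w)}$ and from $v$ respectively. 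Here the $1$-step assumption is essential, since it means that compatibility at each single overlapping node is sufficient for allowability of the surrounding $1$-block. Once $y$ is obtained, setting $P_w := S_w$ verifies condition (ii) of Theorem \ref{thm:equiv-def-irr}, completing the proof. The main obstacle I anticipate is the bookkeeping for showing $\partial T$ is a CPC given that the depths of the various $S_w$ need not be uniform across $w \in \Sigma^n$.
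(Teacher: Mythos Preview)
Your proposal is correct and is precisely the argument the paper has in mind: the paper's own proof is simply ``The proof follows directly from either the definition of CPC-irreducible tree shifts or from Theorem \ref{thm:equiv-def-irr}, thus it is omitted.'' Your forward direction instantiates the definition at $n=0$, and your backward direction verifies condition (ii) of Theorem \ref{thm:equiv-def-irr} via the gluing-and-extension mechanism of Proposition \ref{prop:extensible}, exactly the two tools the paper points to. The only implicit assumption worth flagging is that every $\alpha \in \mathcal{A}$ lies in $B_0(X)$, which holds under the paper's standing convention that the allowable set $B$ is essential.
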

\begin{proof}
The proof follows directly from either the definition of CPC-irreducible tree shifts or from Theorem \ref{thm:equiv-def-irr}, thus it is omitted.
\end{proof}

We say that two symbols $\alpha$ and $\beta$ are \emph{CPC-connected (in $X$)} if there exists a CPC $S$ and $x \in X$ such that $x_\epsilon = \alpha$ and $x_g = \beta$ for all $g \in S$. Suppose $\edcedge{\alpha}{\beta}{\gamma}$ is an allowable block in $X$ for some $\beta \neq \gamma$. The following theorem demonstrates that replacing $\edcedge{\alpha}{\beta}{\gamma}$ or $\edcedge{\alpha}{\gamma}{\beta}$ with $\edcedge{\alpha}{\gamma}{\gamma}$ does not break the CPC-irreducibility if $\beta$ and $\gamma$ are CPC-connected. Figures \ref{fig:replacement-thm-X2Y} and \ref{fig:replacement-thm-Y2X} illustrate the idea of the proof of Theorem \ref{thm:replacement}.

\begin{figure}
\tikzset{
  r/.style = {draw, circle,fill = red!90!white, font=\bfseries, text =white},
  b/.style = {draw, circle,fill = blue!70!yellow, text = white},
  g/.style = {draw, circle,fill = green!90!white, font=\bfseries, text =white},
  c/.style = {draw, circle,fill = cyan!90!white, font=\bfseries, text =white},
  p/.style = {draw, circle,fill = pink!90!white, font=\bfseries, text =white},
  pu/.style = {draw, circle,fill = purple!90!white, font=\bfseries, text =white},
  ye/.style = {draw, circle,fill = yellow!90!white, font=\bfseries, text =white},
  itria/.style={draw,dashed,shape border uses incircle, isosceles triangle,shape border rotate=90,yshift=-1.5cm}
}
\begin{tikzpicture}[
    scale = 0.5, transform shape, thick,
    every node/.style = {minimum size = 5mm},
    grow = down,  
    level 1/.style = {sibling distance=4.8cm},
    level 2/.style = {sibling distance=2.2cm}, 
    level 3/.style = {sibling distance=1.1cm},
    level 4/.style = {sibling distance=0.55cm},
    level distance = 1.5cm
  ]

\node[r]  (a) {$\alpha$} [grow'=down]
child {node (a1) [g] {$\beta$} 
 child{node[c] {$\beta_0$}}
 child{node[c] {$\beta_0$}}
}
child {node (a2) [b] {$\gamma$}
 child{node (a21) [pu] {$\delta$}
  child{node (a211)[c] {$\beta_0$}}
  child{node[c] {$\beta_0$}}
 }
 child{node (a22) [ye] {$\zeta$}
  child{node[c] {$\beta_0$}}
  child{node (a222) [c] {$\beta_0$}}
 }
};

\node at ($(a)+(0,-6)$) {\LARGE allowable pattern in $X$};

\node[r] at (15,0) (b) {$\alpha$} [grow'=down]
child {node (b1) [b] {$\gamma$}
 child{node (b11) [pu] {$\delta$}
  child{node (b111) [c] {$\beta_0$}}
  child{node (b112) [c] {$\beta_0$}}
 }
 child{node (b12) [ye] {$\zeta$}
  child{node (b121) [c] {$\beta_0$}}
  child{node (b122) [c] {$\beta_0$}}
 }
}
child {node (b2) [b] {$\gamma$}
 child{node (b21) [pu] {$\delta$}
  child{node (b211) [c] {$\beta_0$}}
  child{node (b212) [c] {$\beta_0$}}
 }
 child{node (b22) [ye] {$\zeta$}
  child{node (b221) [c] {$\beta_0$}}
  child{node (b222) [c] {$\beta_0$}}
 }
};

\node at ($(b)+(0,-6)$) {\LARGE allowable pattern in $Y$};

\draw[>=stealth,->] ($(a1)+(3.8cm,0)$) -- ($(b2)+(-3.8cm,0)$);
\draw[draw=none] (a1) -- (b2) node [midway, below]  {Replace};

\begin{pgfonlayer}{bg}
 \draw[ye] ($(a)$) -- ($(a1)$) -- ($(a2)$) -- cycle;
 \draw[g] ($(a2)$) -- ($(a21)$) -- ($(a211)$) -- ($(a222)$) -- ($(a22)$) -- cycle;
 \draw[p] ($(b)$) -- ($(b1)$) -- ($(b2)$) -- cycle;
 \draw[g] ($(b2)$) -- ($(b21)$) -- ($(b211)$) -- ($(b222)$) -- ($(b22)$) -- cycle;
 \draw[g] ($(b1)$) -- ($(b11)$) -- ($(b111)$) -- ($(b122)$) -- ($(b12)$) -- cycle;
\end{pgfonlayer}

\end{tikzpicture}
\caption{Illustration of proof of Theorem \ref{thm:replacement} Part 1: If $X$ is CPC-irreducible, then $Y$ is CPC-irreducible. To make the left pattern an allowable pattern in $Y$, we only need to replace the local pattern starting with $\beta$ by the one starting with $\gamma$.}
\label{fig:replacement-thm-X2Y}
\end{figure}
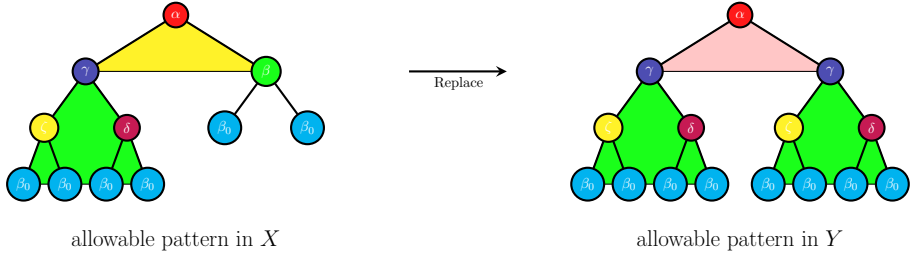

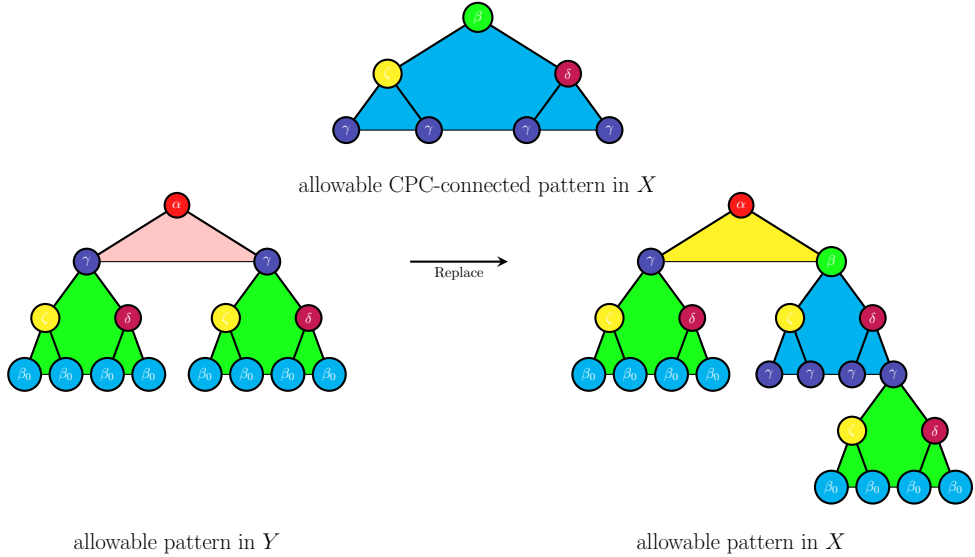
\begin{figure}
\tikzset{
  r/.style = {draw, circle,fill = red!90!white, font=\bfseries, text =white},
  b/.style = {draw, circle,fill = blue!70!yellow, text = white},
  g/.style = {draw, circle,fill = green!90!white, font=\bfseries, text =white},
  c/.style = {draw, circle,fill = cyan!90!white, font=\bfseries, text =white},
  p/.style = {draw, circle,fill = pink!90!white, font=\bfseries, text =white},
  pu/.style = {draw, circle,fill = purple!90!white, font=\bfseries, text =white},
  ye/.style = {draw, circle,fill = yellow!90!white, font=\bfseries, text =white},
  itria/.style={draw,dashed,shape border uses incircle, isosceles triangle,shape border rotate=90,yshift=-1.5cm}
}
\begin{tikzpicture}[
    scale = 0.5, transform shape, thick,
    every node/.style = {minimum size = 5mm},
    grow = down,  
    level 1/.style = {sibling distance=4.8cm},
    level 2/.style = {sibling distance=2.2cm}, 
    level 3/.style = {sibling distance=1.1cm},
    level 4/.style = {sibling distance=2.2cm},
    level 5/.style = {sibling distance=1.1cm},
    level 6/.style = {sibling distance=0.55cm},
    level distance = 1.5cm
  ]

\node[g] at (8,0) (a) {$\beta$} [grow'=down]
child {node (a1) [pu] {$\delta$} 
 child{node (a11) [b] {$\gamma$}}
 child{node (a12) [b] {$\gamma$}}
}
child {node (a2) [ye] {$\zeta$}
 child{node (a21) [b] {$\gamma$}}
 child{node (a22) [b] {$\gamma$}}
};

\node at ($(a)+(0,-4.5)$) {\LARGE allowable CPC-connected pattern in $X$};

\node[r] at (0,-5) (b) {$\alpha$} [grow'=down]
child {node (b1) [b] {$\gamma$}
 child{node (b11) [pu] {$\delta$}
  child{node (b111) [c] {$\beta_0$}}
  child{node (b112) [c] {$\beta_0$}}
 }
 child{node (b12) [ye] {$\zeta$}
  child{node (b121) [c] {$\beta_0$}}
  child{node (b122) [c] {$\beta_0$}}
 }
}
child {node (b2) [b] {$\gamma$}
 child{node (b21) [pu] {$\delta$}
  child{node (b211) [c] {$\beta_0$}}
  child{node (b212) [c] {$\beta_0$}}
 }
 child{node (b22) [ye] {$\zeta$}
  child{node (b221) [c] {$\beta_0$}}
  child{node (b222) [c] {$\beta_0$}}
 }
};

\node at ($(b)+(0,-9)$) {\LARGE allowable pattern in $Y$};

\node[r] at (15,-5) (c) {$\alpha$} [grow'=down]
child {node (c1) [g] {$\beta$}
 child {node (c11) [pu] {$\delta$} 
  child{node (c111) [b] {$\gamma$}
   child{node (c1111) [pu] {$\delta$}
    child{node (c11111) [c] {$\beta_0$}}
    child{node (c11112) [c] {$\beta_0$}}
   }
   child{node (c1112) [ye] {$\zeta$}
    child{node (c11121) [c] {$\beta_0$}}
    child{node (c11122) [c] {$\beta_0$}}
   }
  }
  child{node (c112) [b] {$\gamma$}}
 }
 child {node (c12) [ye] {$\zeta$}
  child{node (c121) [b] {$\gamma$}}
  child{node (c122) [b] {$\gamma$}}
 }
}
child {node (c2) [b] {$\gamma$}
 child{node (c21) [pu] {$\delta$}
  child{node (c211) [c] {$\beta_0$}}
  child{node (c212) [c] {$\beta_0$}}
 }
 child{node (c22) [ye] {$\zeta$}
  child{node (c221) [c] {$\beta_0$}}
  child{node (c222) [c] {$\beta_0$}}
 }
};

\node at ($(c)+(0,-9)$) {\LARGE allowable pattern in $X$};

\draw[>=stealth,->] ($(b1)+(3.8cm,0)$) -- ($(c2)+(-3.8cm,0)$);
\draw[draw=none] (b1) -- (c2) node [midway, below]  {Replace};

\begin{pgfonlayer}{bg}
 \draw[c] ($(a)$) -- ($(a1)$) -- ($(a11)$) -- ($(a22)$) -- ($(a2)$) -- cycle;
 \draw[p] ($(b)$) -- ($(b1)$) -- ($(b2)$) -- cycle;
 \draw[g] ($(b2)$) -- ($(b21)$) -- ($(b211)$) -- ($(b222)$) -- ($(b22)$) -- cycle;
 \draw[g] ($(b1)$) -- ($(b11)$) -- ($(b111)$) -- ($(b122)$) -- ($(b12)$) -- cycle;
 \draw[ye] ($(c)$) -- ($(c1)$) -- ($(c2)$) -- cycle;
 \draw[c] ($(c1)$) -- ($(c11)$) -- ($(c111)$) -- ($(c122)$) -- ($(c12)$) -- cycle;
 \draw[g] ($(c2)$) -- ($(c21)$) -- ($(c211)$) -- ($(c222)$) -- ($(c22)$) -- cycle;
 \draw[g] ($(c111)$) -- ($(c1111)$) -- ($(c11111)$) -- ($(c11122)$) -- ($(c1112)$) -- cycle;
\end{pgfonlayer}

\end{tikzpicture}
\caption{Illustration of proof of Theorem \ref{thm:replacement} Part 2: If $Y$ is CPC-irreducible, then $X$ is CPC-irreducible. Suppose the top pattern, which is allowable in $X$ such that $\beta$ is CPC-connected to $\gamma$. To make the lower left pattern an allowable pattern in $X$, we replace one local pattern starting with $\gamma$ by the top pattern first, then we complete the construction by gluing patterns starting with $\gamma$.}
\label{fig:replacement-thm-Y2X}
\end{figure}

\begin{theorem} \label{thm:replacement}
Let $X$ be a TSFT induced by some allowable set $B \subseteq \mathcal{A}^{\Delta_1}$. Suppose $\edcedge{\alpha}{\beta}{\gamma} \in B$ such that $\beta$ and $\gamma$ are CPC-connected in $X$. Let $Y$ be the TSFT induced by $B' := (B \setminus \{\edcedge{\alpha}{\beta}{\gamma}\}) \bigcup \{\edcedge{\alpha}{\gamma}{\gamma}\}$. Then, $X$ is CPC-irreducible if and only if $Y$ is CPC-irreducible.
\end{theorem}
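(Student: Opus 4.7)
The plan is to prove both implications by direct construction, relying on Lemma \ref{lem:CPC_irr} to reduce CPC-irreducibility of these $1$-step TSFTs to the statement that an arbitrary pair $\alpha', \beta' \in \mathcal{A}$ is CPC-connected in the target TSFT. Given a witness in one system, I would transform it into a witness in the other by either ``collapsing'' or ``expanding'' the nodes at which the replaced transition $\edcedge{\alpha}{\beta}{\gamma}$ occurs, using the hypothesized connection $\beta \sim \gamma$ for one of the two directions.

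For the forward direction, fix a witness $x \in X$ with $x_\epsilon = \alpha'$ and $x_g = \beta'$ for all $g$ in some CPC $S$, and define a path-rewriting map $\phi: \Sigma^* \to \Sigma^*$ recursively by $\phi(\epsilon) = \epsilon$ and
\[
\phi(w s_i) = \begin{cases} \phi(w)\, s_2 & \text{if } \edcedge{x_{\phi(w)}}{x_{\phi(w) s_1}}{x_{\phi(w) s_2}} = \edcedge{\alpha}{\beta}{\gamma}, \\ \phi(w)\, s_i & \text{otherwise.} \end{cases}
\]
Put $y := x \circ \phi$. Every local pattern of $y$ is either inherited from $x$ (hence in $B \setminus \{\edcedge{\alpha}{\beta}{\gamma}\} \subseteq B'$) or equals $\edcedge{\alpha}{\gamma}{\gamma} \in B'$, so $y \in Y$. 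The associated CPC for $y$ will be $S' := \phi^{-1}(S)$: since $|\phi(w)| = |w|$ and $\phi$ is prefix-preserving, $S'$ is finite and a prefix code, and completeness follows by extending any word of length at least $\max_{g \in S'} |g|$ to an infinite path $\omega$, tracing $\phi(\omega)$ to its unique prefix in $S$, and pulling back. Finally $y_g = x_{\phi(g)} = \beta'$ for every $g \in S'$, as required.

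For the backward direction, fix a witness $y \in Y$ for $(\alpha', \beta')$ with CPC $S$, and let $p \in X$ together with CPC $P$ realize the hypothesized connection $\beta \sim \gamma$, so $p_\epsilon = \beta$ and $p_h = \gamma$ for all $h \in P$. I would construct $x \in X$ by inserting a copy of $p|_{R(P)}$ as a bridge at every occurrence of $\edcedge{\alpha}{\gamma}{\gamma}$ in $y$ (the only transition in $B' \setminus B$; if $\edcedge{\alpha}{\gamma}{\gamma} \in B$ already then $Y \subseteq X$ and nothing is to do). Concretely, I would inductively build a sequence of patterns $x^{(n)}$ on prefix-closed supports $T^{(n)}$ together with a correspondence $\pi^{(n)}: \partial T^{(n)} \to \Sigma^*$ tracking each boundary node's counterpart in $y$; at each boundary node $g$ with $g_y := \pi^{(n)}(g)$, if $y$'s local pattern at $g_y$ is $\edcedge{\alpha}{\gamma}{\gamma}$, I replace it by $\edcedge{\alpha}{\beta}{\gamma} \in B$, attach $p|_{R(P)}$ under $g s_1$, and set $\pi^{(n+1)}(g s_1 h) = g_y s_1$ for $h \in P$ and $\pi^{(n+1)}(g s_2) = g_y s_2$; otherwise I simply copy $y$'s transition, which then lies in $B$. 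Propositions \ref{prop:CPC4} and \ref{prop:CPC5} ensure $\partial T^{(n)}$ remains a CPC at every stage, and the $\beta'$-CPC in the final configuration is obtained from $S$ by composition with the inserted $P$'s; Proposition \ref{prop:extensible} then supplies an $x \in X$ extending this construction.

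The forward direction is essentially routine bookkeeping once one observes that $\phi$ is length- and prefix-preserving. The main obstacle lies in the backward direction: each bridge insertion plugs in a fresh $y$-subtree that may itself contain further occurrences of $\edcedge{\alpha}{\gamma}{\gamma}$, so the recursion must be performed globally, with the joint invariant that $x^{(n)}$ uses only transitions in $B$ and that $\partial T^{(n)}$ remains a CPC faithfully tracked by $\pi^{(n)}$. Once this invariant is set up and maintained via Propositions \ref{prop:CPC4}, \ref{prop:CPC5}, and \ref{prop:extensible}, the construction converges to the desired $x \in X$ witnessing CPC-connectedness of $(\alpha', \beta')$ in $X$.
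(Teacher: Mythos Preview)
Your proposal is correct and rests on the same two ideas as the paper: for $X\Rightarrow Y$, duplicate the $\gamma$-subtree over the $\beta$-subtree at every occurrence of $\edcedge{\alpha}{\beta}{\gamma}$; for $Y\Rightarrow X$, splice in the $\beta$-to-$\gamma$ bridge $p|_{R(P)}$ at every occurrence of $\edcedge{\alpha}{\gamma}{\gamma}$. The organization, however, differs in an instructive way.

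In the forward direction your global relabeling map $\phi$ is a clean packaging of what the paper does by finite iteration; the two constructions produce the same $y$. In the backward direction the paper avoids your ``cascading bridges'' difficulty altogether by working not on all of $y$ but on the finite pattern $u=y|_{R(S)}$ and processing occurrences of $\edcedge{\alpha}{\gamma}{\gamma}$ \emph{from the deepest upward}. Choosing $\overline{g}\in A_0$ of maximal depth guarantees that the subtree under $\overline{g}s_2$ (which gets copied after the bridge) contains no further occurrences, and the bridge itself is an $X$-pattern, so $|A_{n+1}|=|A_n|-1$ and the procedure terminates in exactly $|A_0|$ steps. Your top-down construction with the tracking map $\pi^{(n)}$ also terminates---because $|\pi^{(n)}(g)|$ increases by one at each step and is bounded by $\max_{h\in S}|h|$---but this requires a separate argument, whereas the paper's bottom-up order makes finiteness automatic. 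Either route works; the deepest-first trick is worth noting as it turns a potentially delicate recursion into a straightforward finite loop.
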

\begin{proof}
Suppose $X$ is CPC-irreducible. Given any $\alpha_0, \beta_0 \in \mathcal{A}$, Lemma \ref{lem:CPC_irr} indicates that there exist $x = x^{\ket{\alpha_0, \beta_0}}$ and CPC $S = S^{\ket{\alpha_0, \beta_0}}$ such that $x_\epsilon = \alpha_0$ and $x_g = \beta_0$ for all $g \in S$. Let $u:=x|_{R(S)}$. We claim that there exists an allowable pattern $\overline{u}=\overline{u}^{\ket{\alpha_0,\beta_0}}$ in $Y$ such that $\overline{u}_\epsilon = \alpha_0$ and $\overline{u}_g = \beta_0$ for all $g \in \overline{S}$, where $\overline{S}( = \overline{S}^{\ket{\alpha_0,\beta_0}} ):= \partial s(\overline{u})$ is a CPC. Proposition \ref{prop:extensible} yields that $Y$ is CPC-irreducible.
		 
The desired $\overline{u}$ can be constructed as follows.
        
Denote $u^{(0)} = u$ and $S_0 = S$. Consider the set $A_0:=\{g \in \Sigma^*: g \Delta_1\ \subseteq R(S), \edcedge{u^{(0)}_{g}}{u^{(0)}_{g s_1}}{u^{(0)}_{g s_2}} = \edcedge{\alpha}{\beta}{\gamma}\}$. If $A_0 = \emptyset$, then $\overline{u} = u^{(0)}$. Otherwise, choose $\overline{g} \in A_0$ such that $\overline{g} = \max_{h \in A_0} \norm{h}$, and consider a specific pattern $u^{(1)}$ defined as
		\[u^{(1)}:= (u^{(0)} \setminus \{(\overline{g} s_1 h, u^{(0)}_{\overline{g} s_1 h}): \overline{g} s_1 h \in R(S_0)\} ) \bigcup \{(\overline{g} s_1 h, u^{(0)}_{\overline{g} s_2 h}): \overline{g} s_2 h \in R(S_0)\}. \]
See Figure \ref{fig:replacement-thm-X2Y} for the construction of $u^{(1)}$. Herein, we use the set-theoretic definition of function to present the construction of desired pattern for clarity. Note that $S' := \partial \{h \in \Sigma^*: \overline{g} s_2 h \in R(S)\}$ is a CPC and $R(S') = \{h \in \Sigma^*: \overline{g} s_2 h \in R(S)\}$ by Proposition \ref{prop:CPC4_0}. Denote $S_1=\partial s(u^{(1)})$. It follows from Proposition \ref{prop:CPC4} that $S_1$ is a CPC.

Next, for any $g' \in S_1$, we claim that $u^{(1)}_{g'} = \beta_0$. If $\overline{g} s_1$ is not a prefix of $g'$, then $u^{(1)}_{g'} = u^{(0)}_{g'} = \beta_0$. If $\overline{g} s_1$ is a prefix of $g'$, $g' = \overline{g} s_1 w$, then $u^{(1)}_{\overline{g} s_1 w} = u^{(0)}_{\overline{g} s_2 w} = \beta_0$. That is, $u^{(1)}_g = \beta_0$ for all $g \in S$.
		
Note that $A_1:=\{g \in \Sigma^*: g \Delta_1\ \subseteq R(\overline{S}), \edcedge{u^{(0)}_{g}}{u^{(0)}_{g s_1}}{u^{(0)}_{g s_2}} = \edcedge{\alpha}{\beta}{\gamma} \} = A_0 \setminus \{\overline{g}\}$. If $A_1 = \emptyset$, then $\overline{u} = u^{(1)}$. Otherwise, we can construct $u^{(2)}$ via the same argument. Repeat the procedure and construct $\{A_n\}$ and $\{u^{(n)}\}$ with $A_n \supsetneq A_{n+1}$ if $A_n \ne \emptyset$. Let $N = \norm{A_0}$ and $\overline{u} = u^{(N)}$ is the desired pattern. 
		
Now if $Y$ is CPC-irreducible, a similar argument follows. Since $Y$ is CPC-irreducible, given any $\alpha_0, \beta_0 \in \mathcal{A}$, there exists some allowable pattern in $Y$, $u = u^{\ket{\alpha_0, \beta_0}}$ such that $S = S^{\ket{\alpha_0, \beta_0}} = \partial s(u)$ is a CPC and $u_\epsilon = \alpha_0$ and $u_g = \beta_0$ for all $g \in S$. It remains to find an allowable pattern in $X$, $\overline{u}=\overline{u}^{\ket{\alpha_0,\beta_0}}$, where $\overline{S}( = \overline{S}^{\ket{\alpha_0,\beta_0}} ):= \partial s(\overline{u})$ is a CPC, $\overline{u}_\epsilon = \alpha_0$ and $\overline{u}_g = \beta_0$ for all $g \in \overline{S}$. 
		
Note that if $\alpha_0 = \beta$ and $\beta_0 = \gamma$, existence of allowable pattern $\overline{u}^{\ket{\beta,\gamma}}$ is guaranteed by the assumption of CPC-connectedness in $X$ in Theorem \ref{thm:replacement}.
        
For other $(\alpha_0, \beta_0) \ne (\beta, \gamma)$, $\overline{u}$ can be constructed in the following manner.
        
Denote $u^{(0)} = u$ and $S_0 = S$. Let $A_0=(A^{\ket{\alpha_0,\beta_0}}):=\{g \in \Sigma^*: g \Delta_1\ \subseteq R(S), \edcedge{u_{g}}{u_{g s_1}}{u_{g s_2}} = \edcedge{\alpha}{\gamma}{\gamma}\}$. 
		
$A_0 \ne \emptyset$. Choose $\overline{g} \in A_0$ such that $\norm{\overline{g}} = \max_{h \in A_0} \norm{h}$. Consider the pattern $u^{(1)}$ defined as follows:
\begin{align*}
u^{(1)}:= &\quad(u^{(0)} \setminus \{(\overline{g} s_1 h, u^{(0)}_{\overline{g} s_1 h}): \overline{g} s_1 h \in R(S_0)\} ) \\
 &\bigcup \{(\overline{g} s_1 h, \overline{u}_{h}^{\ket{\beta,\gamma}})): h \in R(\overline{S}^{\ket{\beta,\gamma}})\} \\
 &\bigcup \{ (\overline{g} s_1 w h, u^{(0)}_{\overline{g} s_2 h}): w \in \overline{S}^{\ket{\beta,\gamma}}, \overline{g} s_2 h \in R(S_0) \},
\end{align*}
where $\overline{u}_{h}^{\ket{\beta,\gamma}}$ and $\overline{S}^{\ket{\beta,\gamma}}$ are already given. See Figure \ref{fig:replacement-thm-Y2X} for the construction of $u^{(1)}$.
        
Denote $S_1=\partial s(u^{(1)})$. Then, by Proposition \ref{prop:CPC4}, $S_1$ is a CPC.
		
Next, for all $g' \in S_1$, we claim that $u^{(1)}_{g'} = \beta_0$. If $\overline{g} s_1$ is not a prefix of $g'$, then $u^{(1)}_{g'} = u^{(0)}_{g'} = \beta_0$. If $\overline{g} s_1$ is a prefix of $g'$, $g' = \overline{g} s_1 w h$ where $w \in \overline{S}^{\ket{\beta,\gamma}}$. Thus, $u^{(1)}_{\overline{g} s_1 w h} = u^{(0)}_{\overline{g} s_2 h} = \beta_0$.
		
Note that $A_1:=\{g \in \Sigma^*: g \Delta_1\ \subseteq R(S_1), \edcedge{u^{(1)}_{g}}{u^{(1)}_{g s_1}}{u^{(1)}_{g s_2}} = \edcedge{\alpha}{\gamma}{\gamma} \} = A \setminus \{\overline{g}\}$. If $A_1 = \emptyset$, then $\overline{u} = \overline{u}^{(1)}$. Otherwise, we can construct $u^{(2)}$ via the same argument. Repeat the procedure and construct $\{A_n\}$ and $\{u^{(n)}\}$ with $A_n \supsetneq A_{n+1}$ if $A_n \ne \emptyset$. Let $N = \norm{A_0}$ and $\overline{u} = u^{(N)}$ is the desired pattern.
		
Therefore, $\overline{u}$ can be found by construction, the proof is then complete.
\end{proof}

\section{Extended Directed Graph}

Given a finite set $V$, a \emph{directed graph} is a pair $(V, E)$ consists of \emph{vertex set} $V$ and \emph{edge set} $E \subset V \times V$. Directed graph plays an important role in the investigation of shifts of finite type. For instance, it is known that a shift of finite type in dimension one is irreducible if and only if its essential graph representation is strongly connected (cf.~\cite{LM-1995}), where a directed graph is called strongly connected if for any two vertices $v_1, v_2$ there is a path from $v_1$ to $v_2$. An \emph{extended directed graph} is an ordered triplet $\mathcal{G}=(V, E_c, E_d)$ defined as follows.
\begin{enumerate}
	\item $V$ is called the \emph{vertex set}.
	\item $E_c\subset V\times V$ is called the \emph{convergent-edge set}.
	\item $E_d\subset V\times V\times V$ is the \emph{divergent-edge set}.
\end{enumerate}
The ordered pair $\mathcal{G}^c = (V, E_c)$ of an extended directed graph $\mathcal{G}$ is called the \emph{intrinsic graph} of $\mathcal{G}$. Note that the divergent-edge set always consists of those edges $\edcedge{\alpha}{\beta}{\gamma}$ satisfying $\beta \neq \gamma$. It is seen that the intrinsic graph of an extended directed graph is a classical directed graph.

Suppose $\mathcal{G} = (V, E_c, E_d)$ is an extended directed graph. Define $\mathcal{F} \subset V^{\Delta_1}$ as
$$
\mathcal{F} = \{\edcedge{\alpha}{\beta}{\beta}: (\alpha, \beta) \notin E_c\} \bigcup \{\edcedge{\alpha}{\beta}{\gamma}: (\alpha, \beta, \gamma) \notin E_d, \beta \neq \gamma\}
$$
and let $\mathsf{X}_{\mathcal{G}} = \mathsf{X}_{\mathcal{F}}$. The following proposition extends a classical result in symbolic dynamics.

\begin{proposition}
Given an extended directed graph $\mathcal{G} = (V, E_c, E_d)$. Then $\mathsf{X}_{\mathcal{G}}$ is a TSFT over the alphabet $V$.
\end{proposition}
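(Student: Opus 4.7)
The plan is to appeal directly to the definition of TSFT given at the end of Section 2, since the statement amounts to exhibiting a finite forbidden set. First I would note that by hypothesis $V$ is a finite set, and the support $\Delta_1 = \{\epsilon, s_1, s_2\}$ has exactly three nodes, so the collection of all $1$-blocks $V^{\Delta_1}$ over the alphabet $V$ has cardinality $|V|^3$ and is in particular finite. Next I would inspect the blocks comprising the set
$$\mathcal{F} = \{\edcedge{\alpha}{\beta}{\beta}: (\alpha, \beta) \notin E_c\} \bigcup \{\edcedge{\alpha}{\beta}{\gamma}: (\alpha, \beta, \gamma) \notin E_d,\ \beta \neq \gamma\}$$
and verify that each of the symbols $\alpha, \beta, \gamma$ lies in $V$; this is immediate from the hypotheses $E_c \subset V \times V$ and $E_d \subset V \times V \times V$, which force $\mathcal{F} \subseteq V^{\Delta_1}$. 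Consequently $\mathcal{F}$ is a finite forbidden set.

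Since $\mathsf{X}_{\mathcal{G}}$ is defined to be $\mathsf{X}_{\mathcal{F}}$ with this particular $\mathcal{F}$ and the labelings take values in $V$, the conclusion that $\mathsf{X}_{\mathcal{G}}$ is a TSFT over the alphabet $V$ follows at once from the definition. There is no genuine obstacle to overcome here; the argument is essentially a bookkeeping check that the construction lands inside $V^{\Delta_1}$. The content of the extended directed graph formalism lies not in this proposition but in its use as a combinatorial encoding, which in the subsequent sections will allow CPC-irreducibility of $\mathsf{X}_{\mathcal{G}}$ to be tested through connectivity properties of the intrinsic graph $\mathcal{G}^c = (V, E_c)$ together with the divergent-edge data $E_d$.
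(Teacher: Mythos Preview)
Your argument is correct: the proposition follows immediately once one notes that $\mathcal{F}\subseteq V^{\Delta_1}$ and that $V^{\Delta_1}$ is finite, so $\mathsf{X}_{\mathcal{G}}=\mathsf{X}_{\mathcal{F}}$ is a TSFT by definition. The paper in fact gives no proof at all for this proposition, treating it as self-evident; your write-up supplies exactly the routine verification that the authors left implicit.
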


\begin{figure}
\begin{center}
\tikzset{
  r/.style = {draw, circle,fill = red!90!white, font=\bfseries, text =white},
  b/.style = {draw, circle,fill = blue!70!yellow, text = white},
  g/.style = {draw, circle,fill = green!70!yellow, text = white}
}
\tikzstyle{uni_arrow_edge} = [->, draw=black!90, line width=1]{}
\tikzstyle{div_arrow_edge} = [->, dashed, draw=black!90, line width=1]
\begin{tikzpicture}[
    scale = 0.8, transform shape, thick,
    every node/.style = {minimum size = 10mm},
    grow = down,  
    level 1/.style = {sibling distance=2cm},
    level 2/.style = {sibling distance=3.5cm}, 
    level 3/.style = {sibling distance=1.5cm}, 
    level distance = 1.5cm,
  ]
\node[r] (a) {0} [grow'=down]
child {node[b] {1}}
child {node[b] {1}
};
\node[r] at (8,0) (a) {0} [grow'=down]
child {node[g] {2}}
child {node[b] {1}};

\node[r] (v1) at (-1,-4) {0};
\node[b] (v2) at (1,-4) {1};

\node[r] (v3) at (7,-4) {0};
\node[b] (v4) at (9,-3) {1};
\node[g] (v5) at (9,-5) {2};

\draw  (v1) edge[uni_arrow_edge] (v2);
\draw[div_arrow_edge] (7.1,-3.5) .. controls (7.6,-3.2) and (7.9,-3.1) .. (8.5,-3);
\draw[div_arrow_edge] (7.1,-4.5) .. controls (7.6,-4.8) and (7.9,-4.9) .. (8.5,-5);
\end{tikzpicture}
\end{center}
\caption{The extended directed graph representation of allowable blocks. An extended directed graph consists of a vertex set and two edge sets, convergent-edge set and divergent-edge set. Suppose $u = \edcedge{\alpha}{\beta}{\gamma}$ is an allowable block. Then $u$ is represented by a convergent-edge if $\beta = \gamma$, and $u$ is represented by a divergent-edge otherwise. We use solid line and dashed line to distinguish a convergent-edge from a divergent-edge.}
\label{fig:extend-graph}
\end{figure}
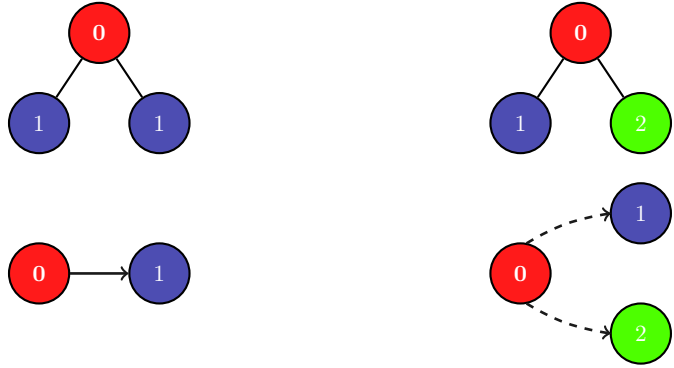

Now suppose $X$ is a TSFT induced by $B \subseteq \mathcal{A}^{\Delta_1}$. An \emph{extended directed graph representation} of $X$ is $\mathcal{G}=(V, E_c, E_d)$ defined as follows.
\begin{enumerate}[\bf 1)]
\item $V=\mathcal{A}$;
\item $E_c = \{(\alpha, \beta) \in \mathcal{A} \times \mathcal{A}: \edcedge{\alpha}{\beta}{\beta} \in B\}$;
\item $E_d = \{(\alpha, \beta, \gamma) \in \mathcal{A} \times \mathcal{A}  \times \mathcal{A}: \beta \ne \gamma, \edcedge{\alpha}{\beta}{\gamma} \in B\}$.
\end{enumerate}
With abuse of notation, we also denote $(\alpha, \beta) \in E_c$ and $(\alpha, \beta, \gamma) \in E_d$ as ${\edcedge{\alpha}{\beta}{\beta}} \in E_c$ and ${\edcedge{\alpha}{\beta}{\gamma}} \in E_d$, respectively. See Figure \ref{fig:extend-graph}. The following proposition, which is analogous to a result in classical symbolic dynamics, comes immediately.

\begin{proposition}
Suppose  $X$ is a TSFT and $\mathcal{G}$ is an extended directed graph representation of $X$. Then $\mathsf{X}_{\mathcal{G}} = X$.
\end{proposition}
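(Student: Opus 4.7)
The plan is to prove the equality $\mathsf{X}_{\mathcal{G}} = X$ by showing that the two shift spaces are defined by the same forbidden subset of $\mathcal{A}^{\Delta_1}$. By the standing convention recalled before Proposition \ref{prop:extensible}, $X$ is a $1$-step TSFT induced by the allowable set $B \subseteq \mathcal{A}^{\Delta_1}$, so $X = \mathsf{X}_{\mathcal{F}}$ where $\mathcal{F} := \mathcal{A}^{\Delta_1}\setminus B$. On the other hand, $\mathsf{X}_{\mathcal{G}} = \mathsf{X}_{\mathcal{F}_{\mathcal{G}}}$ where $\mathcal{F}_{\mathcal{G}}$ is the forbidden set built from the vertex, convergent-edge, and divergent-edge sets of $\mathcal{G}$. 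Hence it suffices to check that $\mathcal{F} = \mathcal{F}_{\mathcal{G}}$ as subsets of $\mathcal{A}^{\Delta_1}$.

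First I would fix an arbitrary $1$-block $u = \edcedge{\alpha}{\beta}{\gamma} \in \mathcal{A}^{\Delta_1}$ and split the argument into the two natural cases corresponding to the structure of $\mathcal{F}_{\mathcal{G}}$. In the case $\beta = \gamma$, by the definition of the extended directed graph representation one has $(\alpha,\beta) \in E_c \Leftrightarrow \edcedge{\alpha}{\beta}{\beta} \in B$; hence $u \in \mathcal{F} \Leftrightarrow u \notin B \Leftrightarrow (\alpha,\beta)\notin E_c \Leftrightarrow u \in \mathcal{F}_{\mathcal{G}}$. In the case $\beta \neq \gamma$, by the definition of $E_d$ one has $(\alpha,\beta,\gamma) \in E_d \Leftrightarrow \edcedge{\alpha}{\beta}{\gamma} \in B$; hence $u \in \mathcal{F} \Leftrightarrow u \notin B \Leftrightarrow (\alpha,\beta,\gamma)\notin E_d \Leftrightarrow u \in \mathcal{F}_{\mathcal{G}}$.

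Combining the two cases yields $\mathcal{F}_{\mathcal{G}} = \mathcal{F}$, and therefore $\mathsf{X}_{\mathcal{G}} = \mathsf{X}_{\mathcal{F}_{\mathcal{G}}} = \mathsf{X}_{\mathcal{F}} = X$. There is no serious obstacle to overcome: the argument is simply the bookkeeping verification that the encoding $B \mapsto (E_c, E_d)$, which splits an allowable $1$-block according to whether its two children carry the same label, is a bijection onto the pair of edge sets of the extended directed graph. The entire proof amounts to a short case analysis and does not require any of the CPC machinery from the previous sections.
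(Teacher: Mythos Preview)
Your proof is correct and is exactly the bookkeeping verification the paper has in mind; the paper itself does not write out a proof, stating only that the proposition ``comes immediately'' by analogy with the classical one-dimensional case. Your case split on $\beta=\gamma$ versus $\beta\neq\gamma$ simply unwinds the definitions of $E_c$, $E_d$, and $\mathcal{F}_{\mathcal{G}}$ to recover $\mathcal{F}=\mathcal{A}^{\Delta_1}\setminus B$, which is all that is needed.
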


Similar to the strong connectedness of a directed graph (also known as irreducible graph), we introduce the $(d, c)$-irreducibility of an extended directed graph as follows.

\begin{definition}\label{def:dc-red}
Let $\mathcal{G}=(V,E_c,E_d)$ be an extended directed graph.
\begin{enumerate}[\bf (1)]
\item $\mathcal{G}$ is called \emph{$(d,c)$-reducible} if there exists $\edcedge{\alpha}{\beta}{\gamma} \in E_d$ (respectively $\edcedge{\alpha}{\gamma}{\beta} \in E_d$) such that there exists a path in $\mathcal{G}^c$ from $\beta$ to $\gamma$ (respectively from $\gamma$ to $\beta$) and $\edcedge{\alpha}{\gamma}{\gamma} \notin E_c$ (respectively $\edcedge{\alpha}{\beta}{\beta} \notin E_c$).
\item  $\mathcal{H}:= (V, E_c \bigcup \{\edcedge{\alpha}{\gamma}{\gamma}\}, E_d)$ is called a \emph{$(d,c)$-reduction} of $\mathcal{G}$, denoted by $\mathcal{G} \preceq \mathcal{H}$.
\item $\mathcal{G}$ is called \emph{$(d,c)$-irreducible} if it is not $(d,c)$-reducible.
\end{enumerate}
\end{definition}

It can be seen that $\preceq$ defined in Definition \ref{def:dc-red} is a partial order on the set of extended directed graphs. In addition, such a partially ordered set is well-ordered. The $(d, c)$-reduction for each extended directed graph $\mathcal{G}$ is finite; that is, there exists $N \in \mathbb{N}$ such that $\mathcal{G}_n = \mathcal{G}_m$ for $n, m \geq N$, where $\mathcal{G} = \mathcal{G}_0 \preceq \mathcal{G}_1 \preceq \mathcal{G}_2 \preceq \cdots$.

\begin{remark}
Observe that the $(d, c)$-reduction is symmetric. More specifically, if there exists $\edcedge{\alpha}{\gamma}{\beta} \in E_d$ such that there is a path in $\mathcal{G}^c$ from $\beta$ to $\gamma$ and $\edcedge{\alpha}{\gamma}{\gamma} \notin E_c$, then we can still construct $\mathcal{H}$ as Definition \ref{def:dc-red} does.
\end{remark}

\begin{theorem} \label{thm:dc-reduction-keep-irr}
Let $X$ be a TSFT and let $\mathcal{G}=(\mathcal{A},E_c,E_d)$ be an extended directed graph representation of $X$. Suppose $\mathcal{H}$ is an extended directed graph such that $\mathcal{G} \preceq \mathcal{H}$. Then $\mathsf{X}_{\mathcal{G}}$ is CPC-irreducible if and only if $\mathsf{X}_{\mathcal{H}}$ is CPC-irreducible.
\end{theorem}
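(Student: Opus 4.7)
The plan is to compare both $\mathsf{X}_{\mathcal{G}}$ and $\mathsf{X}_{\mathcal{H}}$ to a common intermediate TSFT $Y$ via two invocations of Theorem \ref{thm:replacement}. Let $B$ be the allowable set of $\mathsf{X}_{\mathcal{G}}$, so the allowable set of $\mathsf{X}_{\mathcal{H}}$ is $B \cup \{\edcedge{\alpha}{\gamma}{\gamma}\}$, and define $Y$ to be the TSFT induced by $B' := (B \setminus \{\edcedge{\alpha}{\beta}{\gamma}\}) \cup \{\edcedge{\alpha}{\gamma}{\gamma}\}$. The crucial set-theoretic identity is $B \cup \{\edcedge{\alpha}{\gamma}{\gamma}\} = B' \cup \{\edcedge{\alpha}{\beta}{\gamma}\}$, so the single swap of Theorem \ref{thm:replacement} (removing $\edcedge{\alpha}{\beta}{\gamma}$ and ensuring $\edcedge{\alpha}{\gamma}{\gamma}$ is present) applied to either $\mathsf{X}_{\mathcal{G}}$ or $\mathsf{X}_{\mathcal{H}}$ produces exactly $Y$.

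The substantive preliminary step is to verify the CPC-connectedness hypothesis of Theorem \ref{thm:replacement} inside $\mathsf{X}_{\mathcal{G}}$; from there, CPC-connectedness in the larger shift $\mathsf{X}_{\mathcal{H}}$ is immediate from the inclusion $\mathsf{X}_{\mathcal{G}} \subseteq \mathsf{X}_{\mathcal{H}}$. By the $(d,c)$-reducibility hypothesis there is a path $\beta = v_0, v_1, \ldots, v_k = \gamma$ in $\mathcal{G}^c$, which by definition of $E_c$ means that every $\edcedge{v_i}{v_{i+1}}{v_{i+1}}$ lies in $B$. I propose the balanced pattern $u$ on $\Delta_k$ given by $u_g := v_{|g|}$; every $1$-subblock of $u$ has the form $\edcedge{v_i}{v_{i+1}}{v_{i+1}} \in B$, and $\partial s(u) = \Sigma^k$ is a CPC. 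Proposition \ref{prop:extensible} then delivers $x \in \mathsf{X}_{\mathcal{G}}$ with $x|_{\Delta_k} = u$, witnessing the CPC-connectedness of $\beta$ and $\gamma$ via the CPC $\Sigma^k$.

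With CPC-connectedness established, Theorem \ref{thm:replacement} applied to $\mathsf{X}_{\mathcal{G}}$ with the block $\edcedge{\alpha}{\beta}{\gamma} \in B$ yields the equivalence that $\mathsf{X}_{\mathcal{G}}$ is CPC-irreducible if and only if $Y$ is; applied to $\mathsf{X}_{\mathcal{H}}$ with the same block (still present in its allowable set) it gives the analogous equivalence between $\mathsf{X}_{\mathcal{H}}$ and $Y$. Chaining these two equivalences through $Y$ closes the proof. The main obstacle is the CPC-connectedness construction: the balanced pattern must be realized in $\mathsf{X}_{\mathcal{G}}$ itself, not merely in the larger shift $\mathsf{X}_{\mathcal{H}}$, which is precisely why it matters that the connecting path from $\beta$ to $\gamma$ uses only edges already in $E_c$, and why the maximality of $B$ (every element extensible) is needed for Proposition \ref{prop:extensible} to supply the extension inside $\mathsf{X}_{\mathcal{G}}$.
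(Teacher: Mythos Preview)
Your proposal is correct and follows essentially the same approach as the paper: both establish CPC-connectedness of $\beta$ and $\gamma$ in $\mathsf{X}_{\mathcal{G}}$ by building the balanced pattern $u_g = \delta_{|g|}$ along the given path in $\mathcal{G}^c$ and invoking Proposition~\ref{prop:extensible}, then reduce to Theorem~\ref{thm:replacement}. The paper merely says ``similar to the discussion in the proof of Theorem~\ref{thm:replacement}'' at the final step, whereas you make this precise by factoring both $\mathsf{X}_{\mathcal{G}}$ and $\mathsf{X}_{\mathcal{H}}$ through the common intermediate $Y$ via two black-box applications of Theorem~\ref{thm:replacement}; this is a clean way to carry out what the paper leaves implicit.
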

\begin{proof}
Suppose $\mathcal{H} = (\mathcal{A},E'_c,E_d)$ is a $(d,c)$-reduction of $\mathcal{G}$ such that $\edcedge{\alpha}{\gamma}{\gamma} \in E'_c \setminus E_c$. In other words, $\edcedge{\alpha}{\beta}{\gamma} \in E_d$ and  there exists a path $\delta_0 \delta_1 \ldots \delta_N$ in $\mathcal{G}^c$ such that $\delta_0 = \beta$ and $\delta_N = \gamma$ for some $\beta \in \mathcal{A}$. Define $u \in \mathcal{A}^{\Delta_N}$ as $u_g := \delta_{\norm{g}}$ for $g$ with $\norm{g} \leq N$. It follows from Proposition \ref{prop:extensible} that there exists an $x \in \mathsf{X}_{\mathcal{G}}$ such that $x|_{\Delta_N} = u$. Similar to the discussion in the proof of Theorem \ref{thm:replacement}, $\mathsf{X}_{\mathcal{G}}$ is CPC-irreducible if and only if $\mathsf{X}_{\mathcal{H}}$ is CPC-irreducible.
\end{proof}

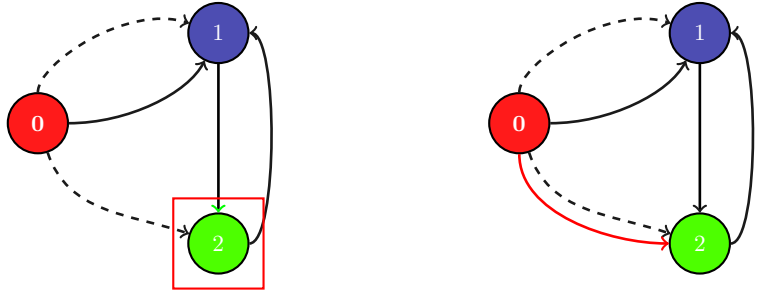
\begin{figure}
\tikzset{
  r/.style = {draw, circle,fill = red!90!white, font=\bfseries, text =white},
  b/.style = {draw, circle,fill = blue!70!yellow, text = white},
  g/.style = {draw, circle,fill = green!70!yellow, text = white}
}
\tikzstyle{uni_arrow_edge} = [->, draw=black!90, line width=1]{}
\tikzstyle{div_arrow_edge} = [->, dashed, draw=black!90, line width=1]
\begin{tikzpicture}[
    scale = 0.8, transform shape, thick,
    every node/.style = {minimum size = 10mm},
    grow = down,  
    level 1/.style = {sibling distance=2cm},
    level 2/.style = {sibling distance=3.5cm}, 
    level 3/.style = {sibling distance=1.5cm}, 
    level distance = 1.5cm,
  ]

\node[r] at (-2,0) (v3) {0};
\node[b] (v4) at (1,1.5) {1};
\node[g] (v5) at (1,-2) {2};
\draw[div_arrow_edge] (v3) .. controls (-2,1) and (-0.5,2) .. (v4);
\draw[uni_arrow_edge] (v3) .. controls (-0.5,0) and (0.5,0.5) .. (v4);

\draw[div_arrow_edge] (v3) .. controls (-1.5,-1.5) and (-0.5,-1.5) .. (v5);

\draw[uni_arrow_edge,draw=green]  (v4) edge (v5);
\draw[uni_arrow_edge] (v5) .. controls (2,-2) and (2,1.5) .. (v4);

\node[r] at (6,0) (v6) {0};
\node[b] (v7) at (9,1.5) {1};
\node[g] (v8) at (9,-2) {2};
\draw[div_arrow_edge] (v6) .. controls (6,1) and (7.5,2) .. (v7);
\draw[uni_arrow_edge] (v6) .. controls (7.5,0) and (8.5,0.5) .. (v7);

\draw[div_arrow_edge] (v6) .. controls (6.5,-1.5) and (7.5,-1.5) .. (v8);
\draw[uni_arrow_edge,draw=red] (v6) .. controls (6,-1.5) and (7.5,-2) .. (v8);

\draw[uni_arrow_edge]  (v7) edge (v8);
\draw[uni_arrow_edge] (v8) .. controls (10,-2) and (10,1.5) .. (v7);

\draw[draw=red]  (0.25,-1.25) rectangle (1.75,-2.75);
\end{tikzpicture}
\caption{The graph on the right-hand side is a $(d,c)$-reduction of the graph on the left-hand side, and is $(d, c)$-irreducible.}
\label{fig:dc-reduction}
\end{figure}

\begin{example} \label{eg:dcreduction}
Suppose $\mathcal{G} = (V, E_c, E_d)$ is an extended directed graph given as $V = \{0, 1, 2\}, E_c = \{(0, 1), (1, 2), (2, 1)\}$, and $E_d = \{(0, 1, 2)\}$. It is seen that $\mathcal{G}$ is $(d, c)$-reducible. The $(d, c)$-reduction of $\mathcal{G}$ is $\mathcal{H} = (V, E_c', E_d)$ with $E_c' = E_c \bigcup \{(0, 2)\}$. Furthermore, $\mathcal{H}$ is $(d, c)$-irreducible. See Figure \ref{fig:dc-reduction}.
\end{example}

\section{Decidability of CPC-Irreducibility}

The preceding theorem raises the question of relationship between CPC-irreducible TSFTs and $(d, c)$-irreducible extended directed graphs. A classical result in symbolic dynamics is that a ($1$-step) shift of finite type is irreducible if and only if it has an irreducible directed graph representation. In addition, every shift of finite type is topologically conjugate with a $1$-step shift induced by some directed graph. This transfers the discussion of irreducible SFTs to irreducible graphs. Since $\mathcal{G}$ is finite, there exists $N \in \mathbb{N}$ such that $\mathcal{G}_n = \mathcal{G}_m$ for $n, m \geq N$, where $\mathcal{G}_n \preceq \mathcal{G}_{n+1}$ and $\mathcal{G}_1 = \mathcal{G}$.

\begin{theorem} \label{thm:cpc-irr-extgraph-strong-connect}
Let $X$ be a TSFT and let $\mathcal{G}=(\mathcal{A},E_c,E_d)$ be a $(d,c)$-irreducible extended directed graph representation of $X$. Then, $\mathsf{X}_{\mathcal{G}}$ is CPC-irreducible if and only if $\mathcal{G}^c$ is strongly connected.
\end{theorem}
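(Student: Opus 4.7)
The plan is to prove both directions.

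For the backward direction ($\mathcal{G}^c$ strongly connected $\Rightarrow$ $\mathsf{X}_{\mathcal{G}}$ is CPC-irreducible), fix $\alpha, \beta \in \mathcal{A}$ and choose a directed path $\alpha = \delta_0, \delta_1, \ldots, \delta_N = \beta$ in $\mathcal{G}^c$, so that each $(\delta_i, \delta_{i+1}) \in E_c$. Define $u \in \mathcal{A}^{\Delta_N}$ by $u_g = \delta_{|g|}$; every internal triple of $u$ has the form $\edcedge{\delta_i}{\delta_{i+1}}{\delta_{i+1}} \in E_c$, so $u$ is allowable. Since $\partial \Delta_N$ is a CPC, Proposition~\ref{prop:extensible} extends $u$ to some $x \in \mathsf{X}_{\mathcal{G}}$ with $x_\epsilon = \alpha$ and $x_g = \beta$ for all $g \in \partial \Delta_N$; Lemma~\ref{lem:CPC_irr} then yields CPC-irreducibility.

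For the forward direction, assume $\mathsf{X}_{\mathcal{G}}$ is CPC-irreducible and suppose, for contradiction, that $\mathcal{G}^c$ is not strongly connected. Let $P := \{(v, w) \in \mathcal{A}^2 : v \not\to w \text{ in } \mathcal{G}^c\}$, which is non-empty; for $(v, w) \in P$, let $d(v, w)$ denote the minimum depth $\max_{g \in S} |g|$ over CPC-witnesses of $v \to w$ in $X$, and set $N := \min\{d(v, w) : (v, w) \in P\}$. The plan is to construct an infinite sequence $a_0, a_1, a_2, \ldots \in \mathcal{A}$ with $(a_k, a_{k+1}) \in P$ and $a_{k+2} \to a_{k+1}$ in $\mathcal{G}^c$ for every $k \geq 0$, seeded by any $(a_0, a_1) \in P$ with $d(a_0, a_1) = N$, and then derive a contradiction from its eventual repetition.

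The extension step, given $(a_k, a_{k+1}) \in P$, inspects the root edge of its minimum-depth witness. In the convergent root case $\edcedge{a_k}{\mu}{\mu}$, a sub-witness certifies a pair $(\mu, a_{k+1})$ of strictly smaller depth; if $(\mu, a_{k+1}) \notin P$ then $a_k \to \mu \to a_{k+1}$ contradicts $(a_k, a_{k+1}) \in P$, and otherwise one recurses on $(\mu, a_{k+1})$. In the divergent root case $\edcedge{a_k}{\delta_1}{\delta_2}$, one recurses on any sub-pair $(\delta_i, a_{k+1}) \in P$; otherwise both $(\delta_i, a_{k+1}) \notin P$ and we obtain paths $\delta_i \to a_{k+1}$ in $\mathcal{G}^c$. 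If further some $(a_{k+1}, \delta_j) \notin P$, the composed path $\delta_{3-j} \to a_{k+1} \to \delta_j$ yields $\delta_{3-j} \to \delta_j$ in $\mathcal{G}^c$; $(d, c)$-irreducibility applied to $\edcedge{a_k}{\delta_1}{\delta_2} \in E_d$ then forces $(a_k, \delta_j) \in E_c$, so $a_k \to a_{k+1}$, contradicting $(a_k, a_{k+1}) \in P$. The remaining possibility is that both $(a_{k+1}, \delta_i) \in P$; then set $a_{k+2} := \delta_1$, giving $a_{k+2} \to a_{k+1}$ in $\mathcal{G}^c$ and $(a_{k+1}, a_{k+2}) \in P$, as required. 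Since every recursive step strictly decreases the witness depth while pair depths are bounded below by $N$, the recursion terminates in finitely many steps.

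Finally, since $\mathcal{A}$ is finite, the constructed sequence must satisfy $a_i = a_j$ for some $1 \leq i < j$. Composing the paths $a_{k+1} \to a_k$ in $\mathcal{G}^c$ for $k \geq 1$ produces a cycle $a_j \to a_{j-1} \to \cdots \to a_{i+1} \to a_i = a_j$, placing $a_i, a_{i+1}, \ldots, a_{j-1}$ in a common strongly connected component of $\mathcal{G}^c$; in particular $a_k \to a_{k+1}$ in $\mathcal{G}^c$ for every $k \in \{i, \ldots, j-1\}$, contradicting $(a_k, a_{k+1}) \in P$. The main obstacle is the divergent root case, where the recursive descent (with the second coordinate kept fixed), the $(d, c)$-irreducibility condition, and the finiteness-based cyclic argument must all be combined to close the loop.
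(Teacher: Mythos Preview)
Your backward direction is the same as the paper's.

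For the forward direction the paper proceeds structurally: it isolates a separate Lemma~\ref{lem:irr_cycle} which, assuming $\mathsf{X}_{\mathcal{G}}$ is CPC-irreducible and $\mathcal{G}^c$ has at least two strongly connected components, produces a directed cycle among those components and locates along that cycle a divergent edge $\edcedge{\alpha_{j_0+1}}{\beta_{j_0}}{\gamma_{j_0}}$ with neither $(\alpha_{j_0+1},\beta_{j_0})$ nor $(\alpha_{j_0+1},\gamma_{j_0})$ in $E_c$; since $\beta_{j_0},\gamma_{j_0}$ lie in the same component, this directly witnesses $(d,c)$-reducibility, a one-line contradiction. Your route is different and avoids the SCC decomposition: you descend through minimum-depth CPC-witnesses with the second coordinate held fixed, using the depth bound $N$ to guarantee that the descent halts at a divergent-root configuration where $(d,c)$-irreducibility forces the terminal case, and then you extract an infinite vertex sequence whose eventual repetition yields a $\mathcal{G}^c$-cycle contradicting $(a_i,a_{i+1})\in P$. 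Both arguments ultimately manufacture a cycle and invoke $(d,c)$-irreducibility at a divergent edge; the paper's version is more modular (Lemma~\ref{lem:irr_cycle} does the combinatorial work once and the theorem's proof is then immediate), while yours is self-contained and more dynamical in flavour. One expository point: once you have recursed, the first coordinate of the current pair is some $v$ reached from $a_k$, not $a_k$ itself, so the conclusions ``$a_k\to\mu\to a_{k+1}$'' and ``$(a_k,\delta_j)\in E_c$'' should be stated with $v$ in place of $a_k$; the argument is unaffected, since the invariant $(v,a_{k+1})\in P$ is what is being contradicted.
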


\begin{figure}
\begin{center}
\tikzset{
  r/.style = {>=stealth,draw, circle,fill = red!90!white, font=\bfseries, text =white},
  b/.style = {>=stealth,draw, circle,fill = blue!70!yellow, text = white},
  g/.style = {>=stealth,draw, circle,fill = green!70!yellow, text = white}
}
\tikzstyle{uni_arrow_edge} = [->, draw=black!90, line width=1]{}
\tikzstyle{div_arrow_edge} = [->, dashed, draw=black!90, line width=1]
\begin{tikzpicture}[
    scale = 0.6, transform shape, thick,
    every node/.style = {minimum size = 10mm},
    grow = down,  
    level 1/.style = {sibling distance=2cm},
    level 2/.style = {sibling distance=3.5cm}, 
    level 3/.style = {sibling distance=1.5cm}, 
    level distance = 1.5cm,
  ]

\node at (11,0) {\Large $\mathcal{G}$};
\node[r,fill=cyan] at (-2,0) (v3) {0};
\node[r,fill=pink] (v4) at (1,1.5) {1};
\node[r] (v5) at (1,-2) {2};

\draw[div_arrow_edge] (v3) .. controls (-2,1) and (-0.5,2) .. (v4);
\draw[uni_arrow_edge] (v3) .. controls (-0.5,0) and (0.5,0.5) .. (v4);

\draw[div_arrow_edge] (v3) .. controls (-1.5,-1.5) and (-0.5,-1.5) .. (v5);

\draw[uni_arrow_edge,draw=black]  (v4) edge (v5);
\draw[uni_arrow_edge] (v5) .. controls (0,-1) and (-0.5,-0.5) .. (v3);

\node[g,fill=yellow] (v7) at (9,1.5) {3};
\node[g] (v8) at (9,-2) {4};

\draw[uni_arrow_edge]  (v7) edge (v8);
\draw[uni_arrow_edge] (v8) .. controls (10,-2) and (10,1.5) .. (v7);

\node[b] (v9) at (5,5) {5};

\draw[div_arrow_edge] (v9) .. controls (3.5,5.5) and (1,3) .. (v4);
\draw[div_arrow_edge] (v9) .. controls (5,4) and (3,-2) .. (v5);
\draw[uni_arrow_edge] (v5) .. controls (1.5,-3.5) and (8.5,-3.5) .. (v8);
\draw[uni_arrow_edge] (v7) .. controls (9,2.5) and (6,5.5) .. (v9);

\end{tikzpicture}
\end{center}

\begin{center}
\tikzset{
  r/.style = {>=stealth,draw, circle,fill = red!90!white, font=\bfseries, text =white},
  b/.style = {>=stealth,draw, circle,fill = blue!70!yellow, text = white},
  g/.style = {>=stealth,draw, circle,fill = green!70!yellow, text = white}
}
\tikzstyle{uni_arrow_edge} = [->, draw=black!90, line width=1]{}
\tikzstyle{div_arrow_edge} = [->, dashed, draw=black!90, line width=1]
\begin{tikzpicture}[
    scale = 0.6, transform shape, thick,
    every node/.style = {minimum size = 10mm},
    grow = down,  
    level 1/.style = {sibling distance=2cm},
    level 2/.style = {sibling distance=3.5cm}, 
    level 3/.style = {sibling distance=1.5cm}, 
    level distance = 1.5cm,
  ]

\node at (11,0) {\Large $\mathcal{H}$};
\node[r,fill=cyan] at (-2,0) (v3) {0};
\node[r,fill=pink] (v4) at (1,1.5) {1};
\node[r] (v5) at (1,-2) {2};

\draw[div_arrow_edge] (v3) .. controls (-2,1) and (-0.5,2) .. (v4);
\draw[uni_arrow_edge] (v3) .. controls (-0.5,0) and (0.5,0.5) .. (v4);

\draw[div_arrow_edge] (v3) .. controls (-1.5,-1) and (-0.5,-1.5) .. (v5);

\draw[uni_arrow_edge,draw=black]  (v4) edge (v5);
\draw[uni_arrow_edge] (v5) .. controls (0,-1) and (-0.5,-0.5) .. (v3);
\draw[uni_arrow_edge,draw=red] (v3) .. controls (-1.5,-1.5) and (-0.5,-2.5) .. (v5);

\node[g,fill=yellow] (v7) at (9,1.5) {3};
\node[g] (v8) at (9,-2) {4};

\draw[uni_arrow_edge]  (v7) edge (v8);
\draw[uni_arrow_edge] (v8) .. controls (10,-2) and (10,1.5) .. (v7);

\node[b] (v9) at (5,5) {5};

\draw[div_arrow_edge] (v9) .. controls (3.5,5.5) and (1,3) .. (v4);
\draw[div_arrow_edge] (v9) .. controls (5,4) and (3,-2) .. (v5);
\draw[uni_arrow_edge,draw=red] (v9) edge (v5);
\draw[uni_arrow_edge] (v5) .. controls (1.5,-3.5) and (8.5,-3.5) .. (v8);
\draw[uni_arrow_edge] (v7) .. controls (9,2.5) and (6,5.5) .. (v9);

\end{tikzpicture}
\end{center}
\caption{Extended directed graph representation $\mathcal{G}$ of TSFT in Example \ref{eg:dc-reduction-strong-connect} (the above one). The graph $\mathcal{H}$ is the $(d, c)$-reduction of $\mathcal{G}$ and is $(d, c)$-irreducible (the two red edges are generated convergent-edges). In addition, it is easily seen that $\mathcal{H}^c$ is strongly connected.}
\label{fig:dc-reduction-strong-connect}
\end{figure}
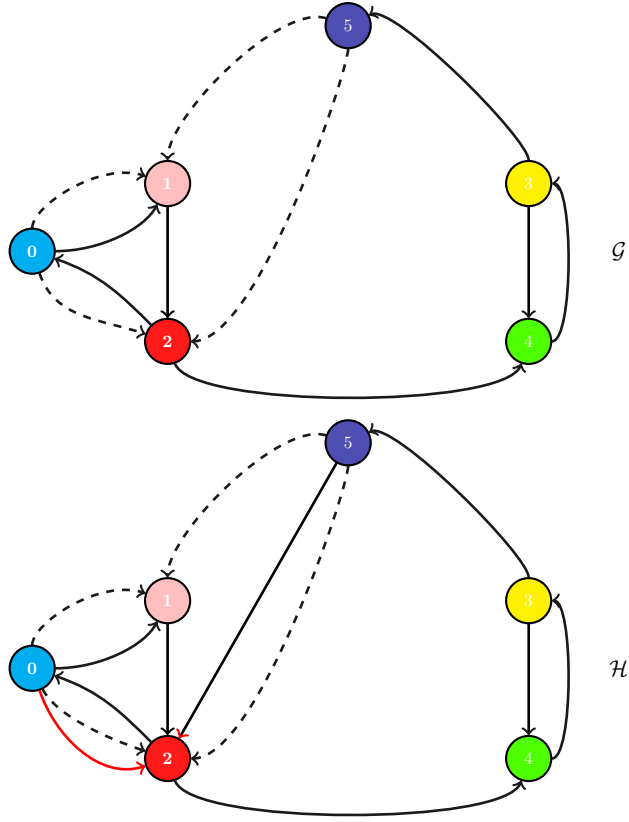

\begin{example} \label{eg:dc-reduction-strong-connect}
Suppose $\mathcal{A} = \{0, 1, 2, 3, 4, 5\}$ and $X$ is a TSFT induced by
$$
B = \left\{
\begin{aligned}
&\edcedge{0}{1}{1}, \edcedge{0}{1}{2}, \edcedge{1}{2}{2}, \edcedge{2}{0}{0}, \edcedge{2}{4}{4}, \\
  &\edcedge{3}{4}{4}, \edcedge{3}{5}{5}, \edcedge{4}{3}{3}, \edcedge{5}{1}{2}
\end{aligned}\right\}.
$$
The graph $\mathcal{G}$ above in Figure \ref{fig:dc-reduction-strong-connect} is an extended directed graph representation of $X$. Observe that $\mathcal{G}$ is $(d, c)$-reducible and there are two convergent-edges ($(0, 2)$ and $(5, 2)$) generated via $(d, c)$-reduction. Let $\mathcal{H}$ be the $(d, c)$-irreducible extended directed graph of $\mathcal{G}$. It follows immediately that $\mathcal{H}^c$ is strongly connected. Theorem \ref{thm:cpc-irr-extgraph-strong-connect} demonstrates that $X$ is CPC-irreducible.
\end{example}

\begin{lemma} \label{lem:irr_cycle}
Let $X$ be a CPC-irreducible TSFT and let $\mathcal{G}$ be an extended directed graph representation of $X$. Suppose $V = V_1 \bigcup V_2 \bigcup \ldots \bigcup V_N$ such that $N>=2$ and $V_1, . . . , V_N$ are strongly connected components. Then
\begin{enumerate}
\item for each $V_i$, there exists $\beta, \gamma \in V_i$, $\alpha \in V_j$ ($i \ne j$) such that $\edcedge{\alpha}{\beta}{\gamma} \in E_c \bigcup E_d$ (denoted by $V_j \xrightarrow[(\beta,\gamma)]{\alpha} V_i$);
\item there exist distinct $V_{i_1}, \ldots, V_{i_M}$, where $1 \le i_1 \le \cdots \le i_M \le N$, and $\alpha_{i_{j}}, \beta_{i_{j}}, \gamma_{i_{j}} \in V_{i_{j}}$ such that
$$
V_{i_1} \xrightarrow[(\beta_{i_{M}}, \gamma_{i_M})]{\alpha_{i_{1}}} V_{i_M} \xrightarrow[(\beta_{i_{M-1}},\gamma_{i_{M-1}})]{\alpha_{i_{M}}}\ldots \xrightarrow[(\beta_{i_2}, \gamma_{i_2})]{\alpha_{i_3}} V_{i_2} \xrightarrow[(\beta_{i_1}, \gamma_{i_1})]{\alpha_{i_2}} V_{i_1}.
$$
In addition, there exists $j_0 \in \{i_1, \ldots, i_M\}$ such that ${\beta_{j_0} \ne \gamma_{j_0}}$, $\edcedge{\alpha_{j_0 + 1}}{\beta_{j_0}}{\beta_{j_0}} \notin E_c$ and $\edcedge{\alpha_{j_0 + 1}}{\gamma_{j_0}}{\gamma_{j_0}} \notin E_c$. 
\end{enumerate}
\end{lemma}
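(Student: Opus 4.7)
The plan is to prove parts (1) and (2) in order, since (1) supplies the ``entry edges'' needed to build the cycle in (2), and then to handle the ``in addition'' clause of (2) by a separate contradiction argument.

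\emph{Part (1).} Fix a component $V_i$ and choose any $\beta_0 \in V_i$ together with any $\alpha_0 \in V_j$ with $j \ne i$. Lemma \ref{lem:CPC_irr} supplies $x \in X$ and a CPC $S$ with $x_\epsilon = \alpha_0$ and $x_g = \beta_0$ for every $g \in S$. Set $U = \{g \in R(S) : x_g \notin V_i\}$; then $\epsilon \in U$ and $U \cap S = \varnothing$. Completeness of $S$ forces every $g \in R(S) \setminus S$ to have both children in $R(S)$ (any infinite ray through $gs_k$ must still meet $S$), so every $g \in U$ has two children to label. If every such $g$ had at least one child in $U$, greedily descending would produce an infinite ray $\epsilon, g_1, g_2, \ldots$ entirely inside $U$; this ray must meet $S$ by completeness, contradicting $U \cap S = \varnothing$. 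Hence some $g^* \in U$ has both children labeled in $V_i$, and $\edcedge{x_{g^*}}{x_{g^* s_1}}{x_{g^* s_2}}$ is the required entry edge.

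\emph{Part (2), the cycle.} Form an auxiliary digraph $G'$ whose vertices are $V_1, \ldots, V_N$ and whose arc set contains $V_j \to V_i$ precisely when an entry edge $\edcedge{\alpha}{\beta}{\gamma}$ with $\alpha \in V_j$ and $\beta, \gamma \in V_i$ is available. Part (1) guarantees every vertex of $G'$ has indegree at least $1$; tracing incoming arcs backward from any vertex in this finite digraph reaches a repetition, producing a simple directed cycle (necessarily of length $\ge 2$, as entry edges cross components). Relabel its vertices so that the cycle is $V_{i_1} \to V_{i_M} \to \cdots \to V_{i_2} \to V_{i_1}$, and fix one witnessing edge $\edcedge{\alpha_{i_k}}{\beta_{i_{k-1}}}{\gamma_{i_{k-1}}}$ for each arc.

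\emph{Part (2), the ``in addition'' clause (the main obstacle).} Argue by contradiction: suppose that for every $j \in \{i_1, \ldots, i_M\}$ one has $\beta_j = \gamma_j$, or $\edcedge{\alpha_{j+1}}{\beta_j}{\beta_j} \in E_c$, or $\edcedge{\alpha_{j+1}}{\gamma_j}{\gamma_j} \in E_c$. In each case one extracts a convergent edge $\edcedge{\alpha_{j+1}}{\delta_j}{\delta_j} \in E_c$ for some $\delta_j \in V_j$ (namely $\delta_j \in \{\beta_j, \gamma_j\}$, or their common value). Since $V_j$ is a strongly connected component of $\mathcal{G}^c$, there is a directed path in $\mathcal{G}^c$ from $\delta_j$ to $\alpha_j$ lying wholly in $V_j$. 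Splicing these pieces around the cycle---the convergent arc $(\alpha_{i_k}, \delta_{i_{k-1}})$ followed by the in-component path from $\delta_{i_{k-1}}$ to $\alpha_{i_{k-1}}$, for $k=1,\ldots,M$---produces a closed walk in $\mathcal{G}^c$ visiting vertices in every $V_{i_k}$. A closed walk in $\mathcal{G}^c$ places all of its vertices in a single strongly connected component of $\mathcal{G}^c$, contradicting the hypothesis that $V_{i_1}, \ldots, V_{i_M}$ are $M\ge 2$ distinct such components. This is the subtle step: the failure of the clause at every index would supply enough convergent ``shortcuts'' to collapse the cycle of components into one SCC of $\mathcal{G}^c$.
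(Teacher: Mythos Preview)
Your proposal is correct and follows essentially the same approach as the paper: for (1) you use the CPC witness $x$ and a descent argument to locate a node where both children fall into $V_i$ (the paper phrases the same descent as a proof by contradiction), for the cycle in (2) you use the same pigeonhole/finiteness argument on the component digraph, and for the ``in addition'' clause you derive the same contradiction by splicing the convergent shortcuts $(\alpha_{j+1},\delta_j)$ with in-component paths $\delta_j\to\alpha_j$ to collapse the distinct SCCs into one.
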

\begin{proof}
(1) Suppose not. Then, there exists $V_i$ such that for every $\beta, \gamma \in V_i$, and $\alpha \in V \setminus V_i$, $\edcedge{\alpha}{\beta}{\gamma} \notin E_c \bigcup E_d$. However, for every given $x$ with $x_\epsilon \in V \setminus V_i$, there exists a sequence $\{w_n\}_{n=0}^\infty \subset \Sigma^*$ such that $\norm{w_n}=n$, $w_n$ is a prefix of $w_{n+1}$ and $x_{w_n} \in V \setminus V_i$. This can be proved by the existence of $\{w_n\}_{n=0}^N$ such that $\norm{w_n} = n$, $w_n$ is a prefix of $w_{n+1}$ and $x_{w_n} \in V \setminus V_i$ by induction on $N$.
		
When $n = 0$, $x_{w_0} \in V \setminus V_i$ by the definition of $x$. Suppose the assertion holds for $N$, then we verify the case $N+1$. Since $\edcedge{x_{w_N}}{x_{w_{N} s_1}}{x_{w_{N} s_2}} \in B$ and $x_{w_n} \in V_k$ for some $V_k \subseteq V \setminus V_i$, thus for some $a \in \{s_1, s_2\}$, $x_{w_N a} \in V \setminus V_i$. Let $w_{N+1}:=w_N a$ and the induction hypothesis holds. 
		
By mathematical induction, the assertion holds for all $N \ge 0$. Since $X$ is CPC-irreducible, given any $\hat{\alpha} \in V \setminus V_i$ and $\hat{\beta} \in V_i$ there exist $y \in X$ and CPC $S$ such that $y_\epsilon = \hat{\alpha}$ and $y_g = \hat{\beta}$ for all $g \in S$ by Lemma \ref{lem:CPC_irr}. However, the argument above guarantees the existence of a particular $w_{N_0} \in \{w_n\}_{n=0}^{\infty} \cap S$ that $\hat{\beta} = y_{w_{N_0}} \in V \setminus V_i$, which is a contradiction.
		
(2) First we prove the existence of $V_{i}$'s by contradiction. Suppose not, we may find distinct $V_{i_1}, V_{i_2}, \ldots, V_{i_{N+1}}$ along with $\alpha_{i_j}, \beta_{i_j}, \gamma_{i_j} \in V_{i_j}$ such that
$$
{V_{i_{N+1}} \xrightarrow[(\beta_{i_{N}}, \gamma_{i_N})]{\alpha_{i_{N+1}}} V_{i_N} \xrightarrow[(\beta_{i_{N-1}},\gamma_{i_{N-1}})]{\alpha_{i_{N}}}\ldots \xrightarrow[(\beta_{i_2}, \gamma_{i_2})]{\alpha_{i_3}} V_{i_2} \xrightarrow[(\beta_{i_1}, \gamma_{i_1})]{\alpha_{i_2}} V_{i_1}},
$$
which is impossible since there are only $N$ distinct $V_i$'s. 
        
Now we show that the existence of $j_0$ naturally follows from the existence of $\{V_{i_k}\}_{k=1}^{M}$ by contradiction. For simplicity, denote $i_{M+1} = i_{1}$. Suppose for each $1 \le j \le M$, there exists some $\delta_{i_{j+1}} \in \{\beta_{i_{j+1}}, \gamma_{i_{j+1}}\}$ such that $\edcedge{\alpha_{i_j}}{\delta_{i_{j+1}}}{\delta_{i_{j+1}}} \in E_c$. Then it contradicts that $\mathcal{G}^c|_{\bigcup_{k=1}^{M} V_{i_k}}$ is not strongly connected. It is because for all $\alpha \in V_{i_{j_0}}, \beta \in V_{i_{j_1}}$, if $j_0 < j_1$, $\alpha, \alpha_{i_{j_0}}, \delta_{i_{j_0 +1}}, \alpha_{i_{j_0 +1}}, \delta_{i_{j_0 +2}}, \ldots, \delta_{i_{j_1}}, \beta$ is a sequence in which neighboring vertex are connected. Such a sequence also exists for the cases $j_0 > j_1$ and $j_0 = j_1$ respectively. Hence, $\mathcal{G}^c|_{\bigcup_{j=1}^{M} V_{i_j}}$ is strongly connected, which contradicts the assumption.
\end{proof}

\begin{proof}[Proof of Theorem \ref{thm:cpc-irr-extgraph-strong-connect}]
For sufficient condition, given any $\alpha, \beta \in V$, there exists path $\gamma_0 \gamma_1 \ldots \gamma_N \gamma_{N+1}$ in $\mathcal{G}^c$ such that $\gamma_0 = \alpha$ and $\gamma_{N+1} = \beta$ since $\mathcal{G}^c$ is strongly connected. Hence, $\{\edcedge{\gamma_i}{\gamma_{i+1}}{\gamma_{i+1}}\}_{i=0}^{N} \subseteq E_c$ and thus $\{\edcedge{\gamma_i}{\gamma_{i+1}}{\gamma_{i+1}}\}_{i=0}^{N} \subseteq B$, where $B \subseteq \mathcal{A}^{\Delta_1}$ is the set of allowable one-blocks. Therefore, for each $\alpha, \beta \in \mathcal{A}$, there exists some $(N+1)$-block $u^{\ket{\alpha, \beta}}$, where $u^{\ket{\alpha, \beta}}_g:=\gamma_{\norm{g}}$ for $\norm{g} \le N+1$. In this case, $\partial s(u^{\ket{\alpha, \beta}})$ is a CPC and for each $g \in s(u^{\ket{\alpha, \beta}})$ with $g \Delta_1 \subseteq s(u^{\ket{\alpha, \beta}})$, $\edcedge{u^{\ket{\alpha, \beta}}_g}{u^{\ket{\alpha, \beta}}_{g s_1}}{u^{\ket{\alpha, \beta}}_{g s_2}} = \edcedge{\gamma_{\norm{g}}}{\gamma_{\norm{g}+1}}{\gamma_{\norm{g}+1}} \in B$. By Proposition \ref{prop:extensible}, there exists $x^{\ket{\alpha,\beta}} \in \mathsf{X}_{\mathcal{G}}$ such that $x|_{s(u)} = u$. Since $\alpha, \beta$ are arbitrary, by Lemma \ref{lem:CPC_irr}, $\mathsf{X}_\mathcal{G}$ is CPC irreducible.
		
For necessary condition, we prove the statement by contradiction. Suppose $\mathcal{G}^c$ is not strongly connected, then by Lemma \ref{lem:irr_cycle}, $V = V_1 \bigcup V_2 \ldots \bigcup V_M$, where $M>1$ and there exist some $V_i, V_{i+1}$ such that $V_i \xrightarrow[(\beta_{i+1},\gamma_{i+1})]{\alpha_i} V_{i+1}$ with $\beta_{i+1} \ne \gamma_{i+1}$, $\edcedge{\alpha_{i}}{\beta_{i+1}}{\beta_{i+1}} \notin E_c$ and $\edcedge{\alpha_{i}}{\gamma_{i+1}}{\gamma_{i+1}} \notin E_c$. It is seen that $\mathcal{G}$ is $(d,c)$-reducible, which is a contradiction.
\end{proof}

Suppose that $\mathcal{G} = (V, E_c, E_d)$ is $(d, c)$-reducible such that $c:=\edcedge{\alpha}{\beta}{\beta} \notin E_c$, $d:=\edcedge{\alpha}{\beta}{\gamma} \in E_d$, and $\beta \beta_1 \beta_2 \ldots \beta_k \gamma$ is a path in $\mathcal{G}^c$. We say that $\mathcal{H}:=(V, E_c \bigcup \{c\}, E_d \setminus \{d\})$ is an \emph{enhanced $(d,c)$-reduction} of $\mathcal{G}$. It is seen that $\mathsf{X}_{\mathcal{G}}$ is CPC-irreducible if and only if $\mathsf{X}_{\mathcal{H}}$ is CPC-irreducible.

\begin{theorem}
Suppose $\mathcal{G} = (V, E_c, E_d)$ is an extended directed graph and $\mathcal{H}$ is an enhanced $(d,c)$-reduction of $\mathcal{G}$. Then $\mathsf{X}_{\mathcal{G}}$ is CPC-irreducible if and only if $\mathsf{X}_{\mathcal{H}}$ is CPC-irreducible.
\end{theorem}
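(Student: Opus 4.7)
The plan is to realize the enhanced $(d,c)$-reduction as the composition of two operations each of which is already known to preserve CPC-irreducibility, namely the $(d,c)$-reduction of Theorem~\ref{thm:dc-reduction-keep-irr} and the divergent-edge deletion of Theorem~\ref{thm:replacement}. Both directions of the biconditional will follow by transitivity of the resulting chain of equivalences, so no fresh subtree-surgery of the kind used inside the proof of Theorem~\ref{thm:replacement} will be required.

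First I would exploit the path $\beta\beta_1\cdots\beta_k\gamma$ in $\mathcal G^c$ to certify that $\beta$ and $\gamma$ are CPC-connected in $\mathsf X_{\mathcal G}$: setting $u_g:=\beta_{|g|}$ on $\Delta_{k+1}$ (with the conventions $\beta_0=\beta$ and $\beta_{k+1}=\gamma$) produces a pattern whose every local $1$-block has the form $\edcedge{\beta_i}{\beta_{i+1}}{\beta_{i+1}}\in E_c$. Proposition~\ref{prop:extensible} then extends $u$ to some element of $\mathsf X_{\mathcal G}$ whose CPC-boundary $\Sigma^{k+1}$ is uniformly labelled $\gamma$. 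This witness of CPC-connectedness remains valid in any shift whose allowable set contains that of $\mathsf X_{\mathcal G}$; in particular, it persists after any enlargement of $E_c$ and is therefore available to feed into Theorem~\ref{thm:replacement} at a later stage.

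Next I would pass, via Theorem~\ref{thm:dc-reduction-keep-irr}, to the intermediate graph $\mathcal G_1$ formed by adjoining the convergent edge $c$ to $E_c$ while leaving $E_d$ unchanged: the theorem yields $\mathsf X_{\mathcal G}$ CPC-irreducible iff $\mathsf X_{\mathcal G_1}$ CPC-irreducible. In $\mathcal G_1$ the divergent edge $d=\edcedge{\alpha}{\beta}{\gamma}$ still lies in $E_d$ and $\beta,\gamma$ are still CPC-connected (by the witness from the previous step, which survives into $\mathsf X_{\mathcal G_1}$), so the hypotheses of Theorem~\ref{thm:replacement} are in force. Applying it to $d$ deletes $d$ from $E_d$ while the replacement convergent edge already lies in $E_c\cup\{c\}$, so the output coincides on the nose with $\mathcal H=(V,E_c\cup\{c\},E_d\setminus\{d\})$. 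Concatenating the two equivalences gives the desired $\mathsf X_{\mathcal G}$ CPC-irreducible iff $\mathsf X_{\mathcal H}$ CPC-irreducible.

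The main obstacle is purely bookkeeping: I must verify that the convergent edge introduced by the $(d,c)$-reduction in the first stage, together with whatever edge the Theorem~\ref{thm:replacement} application would nominally reinsert, recombine to contribute the single added convergent edge $c$ specified by the enhanced reduction, so that the composite of the two stages produces $\mathcal H$ exactly and not some graph with an extra (or missing) convergent edge. Once this matching is checked — which amounts to verifying that $c$ coincides with the self-loop convergent edge that Theorem~\ref{thm:replacement} would insert at $\alpha$ when replacing $d$, given the endpoint the path $\beta\to\cdots\to\gamma$ lands on — the biconditional follows formally and no separate subtree-replacement argument is needed.
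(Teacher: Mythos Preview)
There is a genuine gap in your factoring argument, and it lies precisely where you flagged it: the bookkeeping does not match. With the paper's conventions, the enhanced $(d,c)$-reduction takes the divergent edge $d=\edcedge{\alpha}{\beta}{\gamma}$ together with a path $\beta\to\gamma$ in $\mathcal G^c$ and adjoins $c=\edcedge{\alpha}{\beta}{\beta}$ while deleting $d$. But both of the results you invoke go the other way: the $(d,c)$-reduction of Definition~\ref{def:dc-red}, fed the same $d$ and the same path from $\beta$ to $\gamma$, adjoins $\edcedge{\alpha}{\gamma}{\gamma}$ (not $c$), so your intermediate graph $\mathcal G_1=(V,E_c\cup\{c\},E_d)$ is not a $(d,c)$-reduction of $\mathcal G$ and Theorem~\ref{thm:dc-reduction-keep-irr} does not apply to it. Likewise, Theorem~\ref{thm:replacement} with ``$\beta$ CPC-connected to $\gamma$'' replaces $d$ by $\edcedge{\alpha}{\gamma}{\gamma}$, not by $c$; so your claim that ``the replacement convergent edge already lies in $E_c\cup\{c\}$'' is false in general. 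The composite of your two steps lands on $(V,E_c\cup\{\edcedge{\alpha}{\gamma}{\gamma}\},E_d\setminus\{d\})$, which differs from $\mathcal H$.

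What is missing is a step that trades $\edcedge{\alpha}{\gamma}{\gamma}$ for $\edcedge{\alpha}{\beta}{\beta}$, and neither theorem you cite does this with only a $\beta\to\gamma$ path available; you would need a $\gamma\to\beta$ connection, which is not assumed. The paper does not attempt any such decomposition: its proof simply says ``similar to Theorem~\ref{thm:dc-reduction-keep-irr}'', meaning one redoes the subtree-replacement argument of Theorem~\ref{thm:replacement} directly. In that argument the two children play symmetric roles, so when eliminating occurrences of $d$ one may copy the $s_1$-subtree over the $s_2$-subtree (producing $c$) just as easily as the reverse; this is why the direct surgery goes through even though your black-box composition does not. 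If you want to keep your strategy, you should either argue the mirror-image variant of Theorem~\ref{thm:replacement} explicitly, or note that the enhanced reduction as written appears to have a directional mismatch and proceed with the version in which $c=\edcedge{\alpha}{\gamma}{\gamma}$.
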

\begin{proof}
The proof is similar to the discussion of Theorem \ref{thm:dc-reduction-keep-irr}, thus it is omitted.
\end{proof}

We conclude this section by improving Theorem \ref{thm:cpc-irr-extgraph-strong-connect} by considering irreducible components of the extended directed graph. Let $\mathcal{G}=(V,E_c,E_d)$ be an extended directed graph. Decompose $V = V_1 \bigcup V_2 \bigcup \cdots \bigcup V_n$ so that $\mathcal{G}^c|_{V_i}$ is the biggest strongly connected component for eah $1 \leq i \leq N$ as Lemma \ref{lem:CPC_irr} did. Define $\mathcal{H}=(\widetilde{V},\widetilde{E_c} ,\widetilde{E_d})$ as follows.
\begin{itemize}
\item $\widetilde{V}=\left\{V_1,\ldots,V_N\right\}$.
\item $\widetilde{E_c}=\left\{(V_i, V_j): \exists \alpha\in V_i, \beta, \gamma \in V_j, \edcedge{\alpha}{\beta}{\gamma} \in E_c \bigcup E_d\right\}$.
\item $\widetilde{E_d}=\left\{(V_i, V_j, V_k): j \ne k, \exists \alpha \in V_i, \beta \in V_j, \gamma \in V_k, \edcedge{\alpha}{\beta}{\gamma} \in E_d \right\}$.
\end{itemize}
Then we call $\mathcal{H}=(\widetilde{V},\widetilde{E_c} ,\widetilde{E_d})$ a \emph{grouping $(d, c)$-reduction} of $\mathcal{G}$. Observe that grouping $(d, c)$-reduction of an extended directed graph is unique up to permutation. Furthermore, we refer to $\overline{\mathcal{G}}$ as the limit of $(d, c)$-reduction of $\mathcal{G}$ which is $(d,c)$-irreducible.

\begin{theorem} \label{thm:group-dcreduction-CPCirr}
Let $\mathcal{G}$ be an extended directed graph and let $\mathcal{H}$ be the grouping $(d,c)$-reduction of $\mathcal{G}$. Then $\overline{\mathcal{G}}^c$ is strongly connected if and only if $\overline{\mathcal{H}}^c$ is strongly connected.
\end{theorem}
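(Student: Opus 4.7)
The plan is to reduce the theorem to two lifting statements that translate convergent edges in $\overline{\mathcal{G}}^c$ into paths in $\overline{\mathcal{H}}^c$, and vice versa. The key structural observation is that, by construction, each component $V_i$ is a strongly connected subgraph of $\mathcal{G}^c$, hence of $\overline{\mathcal{G}}^c$, because $(d,c)$-reduction only enlarges $E_c$. Write $V_{[v]}$ for the component containing $v \in V$, and fix reduction sequences $\mathcal{G} = \mathcal{G}_0 \preceq \cdots \preceq \mathcal{G}_N = \overline{\mathcal{G}}$ and $\mathcal{H} = \mathcal{H}_0 \preceq \cdots \preceq \mathcal{H}_M = \overline{\mathcal{H}}$ that add a single convergent edge at each step; the two claims below will be proved by induction on the step at which the edge in question first appears.

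Claim A (forward lift): for every $(\alpha,\gamma) \in \overline{E_c}$ there is a path from $V_{[\alpha]}$ to $V_{[\gamma]}$ in $\overline{\mathcal{H}}^c$. The base case $(\alpha,\gamma) \in E_c$ is immediate from the definition of $\widetilde{E_c}$, which yields $(V_{[\alpha]}, V_{[\gamma]}) \in \widetilde{E_c}$ whenever the two components differ. For the inductive step, the edge arose from some $\edcedge{\alpha}{\beta}{\gamma} \in E_d$ (the symmetric variant is analogous) together with a path $\beta = \delta_0 \to \cdots \to \delta_r = \gamma$ in $\mathcal{G}_k^c$; applying the inductive hypothesis edgewise produces a path from $V_{[\beta]}$ to $V_{[\gamma]}$ in $\overline{\mathcal{H}}^c$. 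If $V_{[\beta]} = V_{[\gamma]}$, the grouping rule collapses the divergent edge into $(V_{[\alpha]}, V_{[\beta]}) \in \widetilde{E_c}$ and we are done; otherwise $(V_{[\alpha]}, V_{[\beta]}, V_{[\gamma]}) \in \widetilde{E_d}$, and combined with the path just built the $(d,c)$-irreducibility of $\overline{\mathcal{H}}$ forces $(V_{[\alpha]}, V_{[\gamma]}) \in \overline{\widetilde{E_c}}$.

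Claim B (backward lift): for every $(V_i, V_j) \in \overline{\widetilde{E_c}}$ there exist $\alpha \in V_i$ and $\beta \in V_j$ with $(\alpha, \beta) \in \overline{E_c}$. If $(V_i, V_j) \in \widetilde{E_c}$, some $\edcedge{\alpha}{\beta}{\gamma} \in E_c \cup E_d$ with $\beta, \gamma \in V_j$ witnesses this; the convergent case is immediate, and in the divergent case the strong connectedness of $V_j$ in $\mathcal{G}^c$ supplies a path $\beta \to \gamma$, whence $(d,c)$-irreducibility of $\overline{\mathcal{G}}$ yields $(\alpha, \gamma) \in \overline{E_c}$. For the inductive step, $(V_i, V_j)$ was added from some $(V_i, V_k, V_j) \in \widetilde{E_d}$ (up to the symmetric variant) realised by $\edcedge{\alpha}{\beta}{\gamma} \in E_d$ with $\alpha \in V_i,\, \beta \in V_k,\, \gamma \in V_j$, together with a path $V_k = W_0 \to \cdots \to W_r = V_j$ in $\mathcal{H}_k^c$. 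The inductive hypothesis on each of its edges supplies witnesses $(\alpha_s, \beta_s) \in \overline{E_c}$ with $\alpha_s \in W_s$ and $\beta_s \in W_{s+1}$; together with the intra-component strong connectedness of each $W_s$ in $\overline{\mathcal{G}}^c$ these assemble into a path from $\beta$ to $\gamma$ inside $\overline{\mathcal{G}}^c$, and a final appeal to the $(d,c)$-irreducibility of $\overline{\mathcal{G}}$ delivers $(\alpha, \gamma) \in \overline{E_c}$ with $\alpha \in V_i$ and $\gamma \in V_j$.

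The theorem then follows quickly. For the forward direction, given $\overline{\mathcal{G}}^c$ strongly connected, a path joining any two elements of $V$ maps edgewise via Claim A to a walk in $\overline{\mathcal{H}}^c$ joining the corresponding components. For the reverse direction, given $\overline{\mathcal{H}}^c$ strongly connected, a path joining two components lifts via Claim B — with the gaps between successive cross-edges filled in using the intra-component strong connectedness of each $V_i$ in $\overline{\mathcal{G}}^c$ — to a path in $\overline{\mathcal{G}}^c$ joining arbitrarily chosen representatives. I expect the main obstacle to be the bookkeeping of the two interlocked inductions: each one reaches into the companion graph and ultimately invokes the full $(d,c)$-irreducibility of the respective limit, so care is needed to ensure that the path witnessing a given reduction genuinely lives in the stage-$k$ graph so that its edges are covered by the inductive hypothesis, and that the symmetric divergent-edge case is treated on equal footing throughout.
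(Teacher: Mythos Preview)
Your proposal is correct and arrives at the same conclusion as the paper, but the route is genuinely different. The paper synchronises the two reduction processes: starting from a fixed chain $\mathcal{G}=\mathcal{G}_0\preceq\cdots\preceq\mathcal{G}_N=\overline{\mathcal{G}}$, it builds a companion chain $\mathcal{H}_0,\ldots,\mathcal{H}_N$ in lockstep (adding $(V_{m_i},V_{k_i})$ to $\widetilde{E_c}$ precisely when $(a_i,c_i)$ is added to $E_c$), maintains at every stage a surjection $f_i$ from the edges of $\mathcal{G}_i$ onto those of $\mathcal{H}_i$, and then proves the nontrivial fact that $\mathcal{H}_N$ is already $(d,c)$-irreducible, i.e.\ equals $\overline{\mathcal{H}}$. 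The path-lifting in both directions then follows from the stage-$N$ surjection together with intra-component connectivity. Your argument decouples the two chains entirely: you fix independent reduction sequences for $\mathcal{G}$ and $\mathcal{H}$, and prove two separate lifting claims by induction on the first-appearance stage of an edge, using the $(d,c)$-irreducibility of the \emph{target} limit (namely $\overline{\mathcal{H}}$ in Claim~A, $\overline{\mathcal{G}}$ in Claim~B) as a closure principle that converts ``divergent edge plus witnessed path'' into a convergent edge. This buys you modularity and avoids the paper's detour through showing $\mathcal{H}_N=\overline{\mathcal{H}}$; the paper's approach, in turn, packages both directions into a single surjection and makes the edge correspondence more explicit. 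Your caveat about the witnessing path living in the stage-$k$ graph is exactly the point that makes the induction go through, and the symmetric divergent case is indeed handled identically.
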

\begin{proof}
		Define $\mathcal{G}_0 := \mathcal{G}$ and $\mathcal{H}_0 := \mathcal{H}$, where $\widetilde{V} = \left\{ V_j \right\}_{j=1}^{M_{\mathcal{G}}}$, $\mathcal{G}_0 = (V,E_c^{(0)},E_d)$ and $\mathcal{H}_0 = (\widetilde{V},\widetilde{E_c}^{(0)},\widetilde{E_d})$.

Let $\mathcal{G} \preceq \mathcal{G}_1 \preceq \mathcal{G}_2 \ldots \preceq \mathcal{G}_N = \overline{\mathcal{G}}$, where $\mathcal{G}_{i+1}$ is the $(a_i \to (b_i,c_i), a_i \to (c_i,c_i))$-reduction of $\mathcal{G}_{i}$ with $a_i \in \tilde{V}_{m_i}, b_i \in \tilde{V}_{n_i}, c_i \in \tilde{V}_{k_i}$, as is defined in Definition \ref{def:dc-red}. Herein, $(a_i \to (b_i,c_i), a_i \to (c_i,c_i))$-reduction of $\mathcal{G}_{i}$ means that the $(d, c)$-reduction of $\mathcal{G}_i$ is achieved by the divergent-edge $a_i \to (b_i,c_i)$ and the convergent-edge $a_i \to (c_i,c_i)$. Notably, $\overline{\mathcal{G}}$ is well-defined since for every $(\alpha_1, \alpha_2) \in V \times V$ (respectively $(\alpha_1, \alpha_2, \alpha_3) \in V \times V \times V$) there is at most one convergent-edge from $\alpha_1$ to $\alpha_2$ (respectively divergent-edge from $\alpha_1$ to $(\alpha_2, \alpha_3)$), this makes the operation of $(d, c)$-reduction stop in finitely many steps and leads to the same graph. Furthermore, if we are able to add an edge in the sequence $\mathcal{G}_i$, we will be able to add it in the other sequence $\mathcal{G}'_i$, where $\mathcal{G}_0 = \mathcal{G}'_0 = \mathcal{G}$. In other words, the procedure is ``confluent''.
        
		Define $\mathcal{H}_{i+1} := \begin{cases}
		\mathcal{H}_{i}, & \text{if } V_{n_i} = V_{k_i}; \\
		(\widetilde{V}, \widetilde{E_c}^{(i)} \bigcup \{\edcedge{\widetilde{V}_{m_i}}{\widetilde{V}_{k_i}}{\widetilde{V}_{k_i}}\},\widetilde{E_d}), & \text{if } V_{n_i} \ne V_{k_i}. \\
		\end{cases}$
		
		First, consider a mapping $f_i: E_c^{(i)} \bigcup E_d \rightarrow (P(V) \times P(V) \times P(V)) \bigcup (P(V) \times P(V))$ defined as follows:
		\[f_i(\edcedge{\alpha}{\beta}{\gamma}) = \begin{cases}
		\edcedge{V_{m}}{V_{n}}{V_{n}}, & \text{if } \alpha \in V_m, \beta,\gamma \in V_{n}; \\
		\edcedge{V_{m}}{V_{n}}{V_{k}}, & \text{if } \alpha \in V_m, \beta \in V_{n}, \gamma \in V_k, V_n \ne V_k. \\
		\end{cases}\]
		Then, it can be shown that $f_i$ is a surjection from $E_c^{(i)} \bigcup E_d$ to $\widetilde{E_c}^{(i)} \bigcup \widetilde{E_d}$ by induction on $i$ for $0 \le i \le N$.
		
		When $i=0$, the assertion holds automatically by definition of grouping reduction.
		
		Suppose it holds for some $i \geq 0$. That is,
        \begin{itemize}
        	\item for each $\edcedge{\alpha}{\beta}{\gamma} \in E_c^{(i)} \bigcup E_d$, there exist some $1 \le j_1, j_2, j_3 \le M_{\mathcal{G}}$ and $\alpha \in V_{j_1}, \beta \in V_{j_2}, \gamma \in V_{j_3}$ such that $\edcedge{V_{j_1}}{V_{j_2}}{V_{j_3}} \in \widetilde{E_c}^{(i)} \bigcup \widetilde{E_d}$; 
            \item for each $\edcedge{V_{j_1}}{V_{j_2}}{V_{j_3}} \in \widetilde{E_c}^{(i)} \bigcup \widetilde{E_d}$, there exist $\alpha \in V_{j_1}, \beta \in V_{j_2}, \gamma \in V_{j_3}$ such that $\edcedge{\alpha}{\beta}{\gamma} \in E_c^{(i)} \bigcup E_d$. 
        \end{itemize}
		It is left to verify the case $i+1$. Note that $E_c^{(i+1)} = E_c^{(i)} \bigcup \{\edcedge{\alpha_i}{\gamma_i}{\gamma_i}\}$, where $\alpha_i \in \tilde{V}_{m_i}, \gamma_i \in \tilde{V}_{k_i}$. If $V_{n_i} \ne V_{k_i}$, $\widetilde{E_c}^{(i+1)} = \widetilde{E_c}^{(i)} \bigcup \{\edcedge{V_{m_i}}{V_{k_i}}{V_{k_i}}\}$. Thus, $f_i(\edcedge{\alpha_i}{\gamma_i}{\gamma_i}) = \edcedge{V_{m_i}}{V_{k_i}}{V_{k_i}} \in \widetilde{E_c}^{(i)}$. If $\beta_i = \gamma_i$, then $V_{n_i} = V_{k_i}$, $\widetilde{E_c}^{(i+1)} = \widetilde{E_c}^{(i)}$, and $f_i(\edcedge{\alpha_i}{\gamma_i}{\gamma_i}) = \edcedge{V_{m_i}}{V_{k_i}}{V_{k_i}} \in \widetilde{E_c}^{(i)}$ already. Hence the assumption holds for $i+1$. 
        
        By mathematical induction, the assertion holds for all $i$ with $1 \le i \le N$.
		
		Now, we would like to generalize the surjective mapping from edge to path by considering the following two assertions.
        
        Firstly, we prove that for each path $\delta_0 \delta_1 \ldots \delta_{M+1}$ in $\mathcal{G}_i^{c}$, there is a path $\eta_0 \eta_1 \ldots \eta_{M+1}$ in $\mathcal{H}_i^c$ such that $\delta_0 \in \eta_0$ and $\delta_{M+1} \in \eta_{M+1}$. It can be proved by considering $\edcedge{\eta_l}{\eta_{l+1}}{\eta_{l+1}}:=f_i(\edcedge{\delta_l}{\delta_{l+1}}{\delta_{l+1}})$ for $0 \le l \le M$ in the sense that $\eta_l \to (\eta_l, \eta_l)$ is defined to be connected.
		
		Secondly, if $\eta_0 \eta_1 \ldots \eta_{M+1}$ is a path in $\mathcal{H}_{N}^c$, without assuming any strong connectivity of $\mathcal{G}_N^c$ and $\mathcal{H}_N^c$, there exists a sequence $\delta_0, \delta_1, \ldots , \delta_{M+1}$, where $\delta_l \in \eta_l$ for all $0 \le l \le M+1$ and $\delta_l \nu_{l,1} \nu_{l,2} \ldots \nu_{l,K_{l}} \delta_{l+1}$ is a path in $\mathcal{G}_N^c$. It can be shown by the following argument:
		
		For each $l$, there exists $\edcedge{\delta_l}{\hat{\delta}_{l+1}}{\check{\delta}_{l+1}} \in E_c^{(N)} \bigcup E_d$ such that $f_N(\edcedge{\delta_l}{\hat{\delta}_{l+1}}{\check{\delta}_{l+1}})=\edcedge{\eta_l}{\eta_{l+1}}{\eta_{l+1}}$. In this case, $\edcedge{\delta_l}{\hat{\delta}_{l+1}}{\hat{\delta}_{l+1}} \in E_c^{(N)}$ since $\mathcal{G}_N^c|_{\eta_{l+1}}$ is strongly connected or $\eta_{l+1}$ consists of a single vertex, and $\delta_{l}, \hat{\delta}_{l}, \check{\delta}_{l} \in \eta_{l}$. Thus, for each $0 \le l \le M$ in $\mathcal{G}$, there exists a path $\delta_l \hat{\delta}_{l+1} \nu_{l+1,1} \nu_{l+1,2} \ldots \nu_{l+1,K_{l+1}} \delta_{l+1}$ as is required from above.
		
		From argument above, $\mathcal{H}_{i+1}$ is either a $(\edcedge{V_{m_i}}{V_{n_i}}{V_{k_i}},\edcedge{V_{m_i}}{V_{k_i}}{V_{k_i}})$-reduction of $\mathcal{H}_i$ or identical to $\mathcal{H}_i$. Herein, $(\edcedge{V_{m_i}}{V_{n_i}}{V_{k_i}},\edcedge{V_{m_i}}{V_{k_i}}{V_{k_i}})$-reduction means that the grouping $(d, c)$-reduction of $\mathcal{H}_i$ is accomplished by the divergent-edge $\edcedge{V_{m_i}}{V_{n_i}}{V_{k_i}}$ and the convergent-edge $\edcedge{V_{m_i}}{V_{k_i}}{V_{k_i}}$. It is because for $\edcedge{\alpha_i}{\beta_i}{\gamma_i}$ with $V_{n_i} \ne V_{k_i}$, there exists a path $\delta_0 \delta_1 \ldots \delta_{M+1}$ in $\mathcal{H}_i^c$, where $\delta_0 = \beta_i$, $\delta_{M+1} = \gamma_i$. Let $\edcedge{\eta_l}{\eta_{l+1}}{\eta_{l+1}} = f_i(\edcedge{\delta_l}{\delta_{l+1}}{\delta_{l+1}}) \in \widetilde{E_c}^{(i)}$, where $0 \le i \le {M}$. Then, $\eta_0 \eta_1 \ldots \eta_{M+1}$ is a path in $\mathcal{H}_i^c$, where $\eta_0 = V_{n_i}, \eta_{M+1} = V_{k_i}$. Hence, the assertion is satisfied.
		
		At this point, it can be shown by contradiction that $\mathcal{H}_N = \overline{\mathcal{H}}$, i.e.\ $\mathcal{H}_N$ is not $(d,c)$-reducible. If $\mathcal{H}_N \ne \overline{\mathcal{H}}$, there exists some $\edcedge{V_{m_{N+1}}}{{V_{n_{N+1}}}}{{V_{k_{N+1}}}} \in \widetilde{E_d}$ (or $\edcedge{V_{m_{N+1}}}{{V_{k_{N+1}}}}{{V_{n_{N+1}}}} \in \widetilde{E_d}$) such that $\eta_0 \eta_1 \ldots \eta_{M+1}$ is a path in $\mathcal{H}_N^c$, $\eta_0 = {V_{n_{N+1}}}$, $\eta_{M+1} = {V_{k_{N+1}}}$ and $\edcedge{V_{m_{N+1}}}{{V_{k_{N+1}}}}{{V_{k_{N+1}}}} \notin \widetilde{E_c}^{(N)}$. Since $f_N: E_c^{(N)} \bigcup E_d \rightarrow \widetilde{E_c}^{(N)} \bigcup \widetilde{E_d}$ is a surjection, there exist ${\alpha_{N+1}}, {\beta_{N+1}}, {\gamma_{N+1}} \in V$ satisfying $f_N(\edcedge{\alpha_{N+1}}{\beta_{N+1}}{\gamma_{N+1}}) = \edcedge{V_{m_{N+1}}}{{V_{n_{N+1}}}}{{V_{k_{N+1}}}}$ (or $f_N(\edcedge{\alpha_{N+1}}{\gamma_{N+1}}{\beta_{N+1}}) = \edcedge{V_{m_{N+1}}}{{V_{k_{N+1}}}}{{V_{n_{N+1}}}}$).
		Under the circumstances, there exists a sequence $\delta_0, \delta_1, \ldots , \delta_{M+1}$, where $\delta_l \in \eta_l$ for all $0 \le l \le M+1$ and $\delta_l \nu_{l,1} \nu_{l,2} \ldots \nu_{l,K_{l}} \delta_{l+1}$ is a path in $\mathcal{G}_N^c$. Consider the sequence, $\delta_{-1}, \delta_0, \delta_1, \ldots, \delta_{M+1}, \delta_{M+2}$, where $\delta_{-1} = \beta_{N+1}$ and $\delta_{M+2} = \gamma_{N+1}$. Also, $\delta_{-1}, \delta_0 \in \eta_0$, $\delta_{M+1}, \delta_{M+2} \in \eta_{M+1}$, and thus there are paths $\delta_{-1} \nu_{-1,1} \nu_{-1,2} \ldots \nu_{-1,K_{-1}} \delta_{0}$ and $\delta_{M+1} \nu_{M+1,1} \nu_{M+1,2} \ldots \nu_{M+1,K_{M+1}} \delta_{M+2}$. This implies $\edcedge{\alpha_{N+1}}{\gamma_{N+1}}{\gamma_{N+1}} \in E_c^{(N)}$ and thus $f_N(\edcedge{\alpha_{N+1}}{\gamma_{N+1}}{\gamma_{N+1}}) = \edcedge{V_{m_{N+1}}}{{V_{k_{N+1}}}}{{V_{k_{N+1}}}} \in  \widetilde{E_c}^{(N)}$, leading to a contradiction.
		
		Now we prove the theorem. For necessary condition, given any $V_i, V_j$ there exists a sequence $\delta_0, \delta_1, \ldots , \delta_{M+1}$, where $\delta_l \in \eta_l$ for all $0 \le l \le M+1$ and $\delta_l \nu_{l,1} \nu_{l,2} \ldots \nu_{l,K_{l}} \delta_{l+1}$ is a path in $\mathcal{G}_N^c$. Then $\overline{\mathcal{H}}^c$ is strongly connected. 
        
		For sufficient condition, given any $\alpha \in V_i \in \widetilde{V}, \beta \in V_j \in \widetilde{V}$, since $\overline{\mathcal{H}}^c$ is strongly connected, there exists a path $\eta_0 \eta_1 \ldots \eta_{M+1}$ such that $\eta_0 = V_i$ and $\eta_{M+1}=V_j$. 
		By surjective property of path, there is a sequence $\delta_{-1}, \delta_0, \delta_1, \ldots , \delta_{M+1} \delta_{M+2}$, where $\delta_{-1}, = \beta_{N+1}$ and $\delta_{M+2} = \gamma_{N+1}$ and there are path $\delta_{l}, \nu_{l,1} \nu_{l,2} \ldots \nu_{l,K_{l}} \delta_{l+1}$ for all $-1 \le l \le M+2$. It follows immediately that $\overline{\mathcal{G}}^c$ is strongly connected. This completes the proof.
\end{proof}

\begin{figure}
\begin{center}
\tikzset{
  r/.style = {>=stealth,draw, circle,fill = red!90!white, font=\bfseries, text =white},
  b/.style = {>=stealth,draw, circle,fill = blue!70!yellow, text = white},
  g/.style = {>=stealth,draw, circle,fill = green!70!yellow, text = white}
}
\tikzstyle{uni_arrow_edge} = [->, draw=black!90, line width=1]{}
\tikzstyle{div_arrow_edge} = [->, dashed, draw=black!90, line width=1]
\begin{tikzpicture}[
    scale = 0.6, transform shape, thick,
    every node/.style = {minimum size = 10mm},
    grow = down,  
    level 1/.style = {sibling distance=2cm},
    level 2/.style = {sibling distance=3.5cm}, 
    level 3/.style = {sibling distance=1.5cm}, 
    level distance = 1.5cm,
  ]

\node[r] at (-2,0) (v3) {0};
\node[r] (v4) at (1,1.5) {1};
\node[r] (v5) at (1,-2) {2};

\draw[div_arrow_edge] (v3) .. controls (-2,1) and (-0.5,2) .. (v4);
\draw[uni_arrow_edge] (v3) .. controls (-0.5,0) and (0.5,0.5) .. (v4);

\draw[div_arrow_edge] (v3) .. controls (-1.5,-1.5) and (-0.5,-1.5) .. (v5);

\draw[uni_arrow_edge,draw=black]  (v4) edge (v5);
\draw[uni_arrow_edge] (v5) .. controls (0,-1) and (-0.5,-0.5) .. (v3);

\node[g] (v7) at (9,1.5) {3};
\node[g] (v8) at (9,-2) {4};

\draw[uni_arrow_edge]  (v7) edge (v8);
\draw[uni_arrow_edge] (v8) .. controls (10,-2) and (10,1.5) .. (v7);

\node[b] (v9) at (5,5) {5};

\draw[div_arrow_edge,line width=2] (v9) .. controls (3.5,5.5) and (1,3) .. (v4);
\draw[div_arrow_edge,line width=2] (v9) .. controls (5,4) and (3,-2) .. (v5);
\draw[uni_arrow_edge,line width=2] (v5) .. controls (1.5,-3.5) and (8.5,-3.5) .. (v8);
\draw[uni_arrow_edge,line width=2] (v7) .. controls (9,2.5) and (6,5.5) .. (v9);

\draw[draw=red]  (0,-0.3) ellipse (3 and 3);
\draw[draw=green]  (9,-0.1) ellipse (3 and 3);
\draw[draw=blue]  (v9) ellipse (2 and 2);
\end{tikzpicture}
\end{center}
\caption{The grouping $(d, c)$-reduction of the extended directed graph $\mathcal{G}$ in Example \ref{eg:dc-reduction-strong-connect} is an extended directed graph $\mathcal{H} = (\widetilde{V},\widetilde{E_c} ,\widetilde{E_d})$ with $\widetilde{V} = \{V_1, V_2, V_3\}$, $\widetilde{E_c} = \{(V_1, V_2), (V_2, V_3)\}$, and $\widetilde{E_d} = \{(V_3, V_1, V_1)\}$. It is seen immediately that $\overline{\mathcal{H}}^c$ is strongly connected.}
\label{fig:grouping-extgraph}
\end{figure}
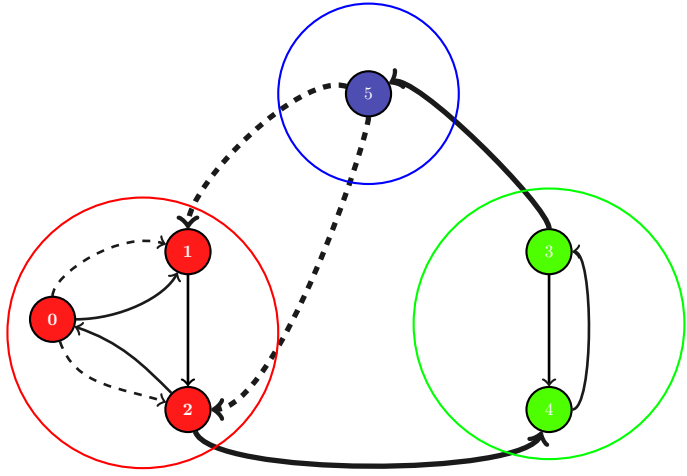

\begin{example}
Let $X$ be the TSFT considered in Example \ref{eg:dc-reduction-strong-connect}. Observe that $\mathcal{G}$ has three irreducible components seen as
$$
V = V_1 \bigcup V_2 \bigcup V_3, \quad \text{where} \quad V_1 = \{0, 1, 2\}, V_2 = \{3, 4\}, V_3 = \{5\}.
$$
Therefore, the grouping $(d, c)$-reduction of $\mathcal{G}$ is an extended directed graph $\mathcal{H}$ that consists of three vertices, two convergent-edges, and one divergent-edge. See Figure \ref{fig:grouping-extgraph}. It can be seen that the divergent-edge $(V_3, V_1, V_1)$ is actually a convergent-edge $(V_3, V_1)$, and $\overline{\mathcal{H}}^c$ is strongly connected.
\end{example}

\section{Conclusion and Discussion}

Since CPC-irreducibility of tree shifts of finite type implies the denseness of strongly periodic points, it is natural to elucidate the decidability of CPC-irreducible TSFTs. Theorems \ref{thm:dc-reduction-keep-irr} and \ref{thm:cpc-irr-extgraph-strong-connect} demonstrate that CPC-irreducibility of TSFTs is decidable. The related algorithm is referred to as the above flowchart in Figure \ref{fig:algorithm}.

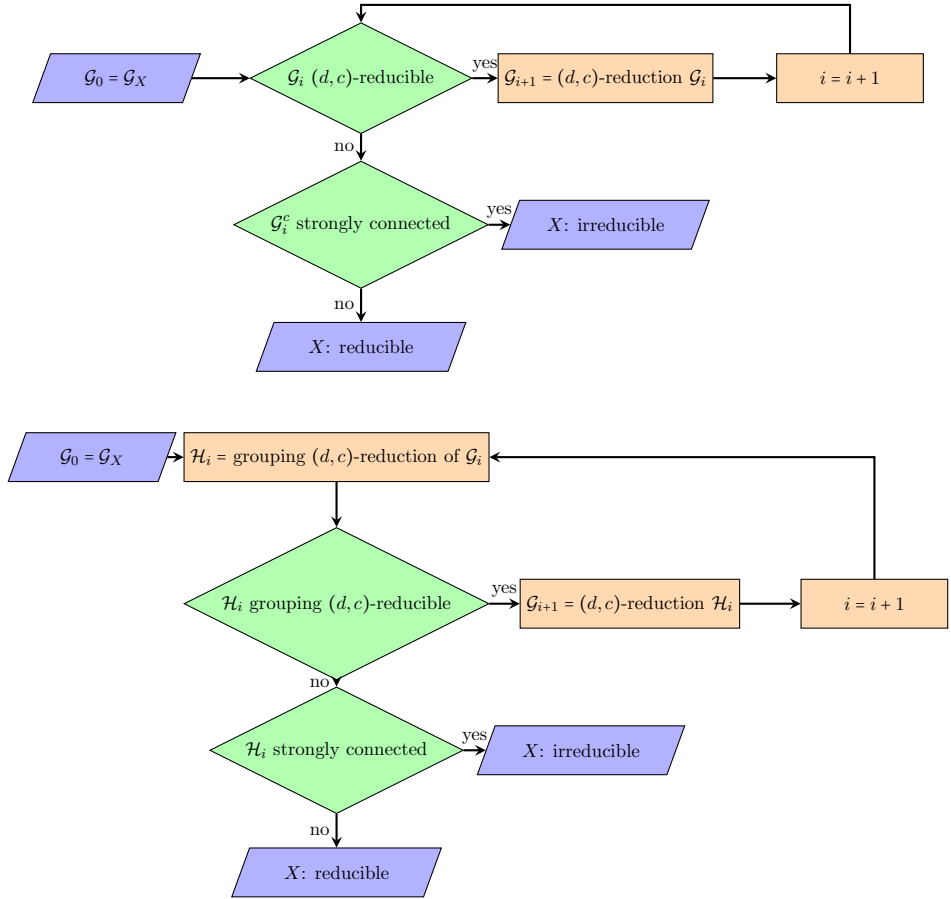
\begin{figure}
\begin{center}
\usetikzlibrary{shapes.geometric, arrows}
\tikzstyle{startstop} = [rectangle, rounded corners, minimum width=3cm, minimum height=1cm,text centered, draw=black, fill=red!30]
\tikzstyle{io} = [trapezium, trapezium left angle=70, trapezium right angle=110, minimum width=3cm, minimum height=1cm, text centered, draw=black, fill=blue!30,aspect=2]
\tikzstyle{process} = [rectangle, minimum width=3cm, minimum height=1cm, text centered, draw=black, fill=orange!30]
\tikzstyle{decision} = [diamond, minimum width=3cm, minimum height=1cm, text centered, draw=black, fill=green!30,aspect=2]
\tikzstyle{arrow} = [thick,->,>=stealth]
\begin{tikzpicture}[scale=0.65, transform shape,grow=left,node distance=5cm]
	\node (in1) [io] {$\mathcal{G}_0=\mathcal{G}_X$};
	\node (dec1) [decision, right of=in1] {$\mathcal{G}_i$ $(d,c)$-reducible};
	\node (dec2) [decision, below of=dec1, node distance=3cm] {$\mathcal{G}_i^c$ strongly connected};
	\node (pro1) [process, right of=dec1] {$\mathcal{G}_{i+1}=(d,c)$-reduction $\mathcal{G}_i$};
	\node (pro2) [process, right of=pro1] {$i=i+1$};
	
	\node (out1) [io, below of=dec2,inner xsep=-4pt, node distance=2.5cm] {$X$: reducible};
	\node (out2) [io, right of=dec2,inner xsep=-8pt] {$X$: irreducible};

	\draw [arrow] (in1) -- (dec1);
	\draw [arrow] (dec1) --node[anchor=east] {no} (dec2);
	\draw [arrow] (dec1) --node[anchor=south] {yes} (pro1);
	\draw [arrow] (pro1) -- (pro2);
	\draw [arrow] (dec2) --node[anchor=east] {no} (out1);
	\draw [arrow] (dec2) --node[anchor=south] {yes} (out2);

	\draw [arrow] (pro2) --++ (0cm,1.5cm) -| (dec1);
\end{tikzpicture}\vspace{2em}

\begin{tikzpicture}[scale=0.65, transform shape,grow=left,node distance=5cm]
	\node (in1) [io] {$\mathcal{G}_0=\mathcal{G}_X$};
	\node (pro0) [process, right of=in1] {$\mathcal{H}_i = $ grouping $(d,c)$-reduction of $\mathcal{G}_i$};
	\node (dec1) [decision, below of=pro0, node distance=3cm] {$\mathcal{H}_i$ grouping $(d,c)$-reducible};
	\node (dec2) [decision, below of=dec1, node distance=3cm] {$\mathcal{H}_i$ strongly connected};
	\node (pro1) [process, right of=dec1, node distance=6cm] {$\mathcal{G}_{i+1}=(d,c)$-reduction $\mathcal{H}_i$};
	\node (pro2) [process, right of=pro1] {$i=i+1$};
	
	\node (out1) [io, below of=dec2,inner xsep=-4pt, node distance=2.5cm] {$X$: reducible};
	\node (out2) [io, right of=dec2,inner xsep=-8pt] {$X$: irreducible};

	\draw [arrow] (in1) -- (pro0);
	\draw [arrow] (pro0) -- (dec1);
	\draw [arrow] (dec1) --node[anchor=east] {no} (dec2);
	\draw [arrow] (dec1) --node[anchor=south] {yes} (pro1);
	\draw [arrow] (pro1) -- (pro2);
	\draw [arrow] (dec2) --node[anchor=east] {no} (out1);
	\draw [arrow] (dec2) --node[anchor=south] {yes} (out2);

	\draw [arrow] (pro2) |-   (pro0);
\end{tikzpicture}
\end{center}
\caption{Flowcharts of algorithms of $(d, c)$-reduction and grouping $(d, c)$-reduction of extended directed graph (Theorems \ref{thm:cpc-irr-extgraph-strong-connect} and \ref{thm:group-dcreduction-CPCirr}) that demonstrate the CPC-irreducibility of tree shifts of finite type is decidable.}
\label{fig:algorithm}
\end{figure}

Whenever a considered TSFT is complicated, for instance, the alphabet is large, or the forbidden set is small, Theorem \ref{thm:group-dcreduction-CPCirr} provides a more efficient algorithm for determining if it is CPC-irreducible. See the below flowchart in Figure \ref{fig:algorithm}. On the other hand, the following question is interesting and remains open.

\begin{problem}
Is the CPC-irreducibility of sofic tree shifts decidable?
\end{problem}

\section*{Acknowledgment}

The authors want to express their gratitude to the anonymous referees for their valuable comments and suggestions, which significantly improve the quality of this paper and make the paper more readable.

\section*{Appendix. The Complexity of Algorithm}

In this appendix, we present the pseudo code of our algorithm and estimate the complexity of the algorithm.

The index start from 0 for the following pseudocode.
\IncMargin{1em}
\begin{algorithm}
\caption{(d,c)-reduction}
\SetKwData{foundReachable}{foundReachable}\SetKwData{divEdge}{divEdge}
\SetKwFunction{isReachable}{isReachable}\SetKwFunction{delete}{delete}\SetKwFunction{append}{append}\SetKwFunction{size}{size}\SetKwFunction{dcReduction}{dcReduction}
\SetKwInOut{Input}{input}\SetKwInOut{Output}{output}
\SetKwProg{Fn}{Function}{}{end}

\Input{
	\vspace{-0.2em}The object \DataSty{graph} has the following data member: 
\begin{tabular}[t]{ l  l  l }	
	\DataSty{graph.v}: &  list of int  \\
	\DataSty{graph.ce}: &  list of int[2]   \\
	\DataSty{graph.de}: &  list of int[3]   \\
\end{tabular}
}

\Output{CPC-irreducible or not}
\BlankLine

\Fn{\dcReduction{\DataSty{graph}}}{
	\foundReachable = True\;
	\While{\foundReachable}{
		\foundReachable = False\;
		$i=0$\;
		\While{$i$ < \size{\DataSty{graph.de}}}{
			\divEdge = \DataSty{graph.de.divEdge[i]} \;
			\If{\isReachable{\DataSty{divEdge[1]},\DataSty{divEdge[2]}}}{
				\DataSty{divEdge[1]} = \DataSty{divEdge[2]}\;
				\DataSty{graph.de}.\delete{$i$}\;
				\DataSty{graph.ce}.\append(\divEdge)\;
				\foundReachable = True\;
			}
			\ElseIf{\isReachable{\DataSty{divEdge[2]}, \DataSty{divEdge[1]}}}{
				\DataSty{divEdge[2]} = \DataSty{divEdge[1]}\;
				\DataSty{graph.de}.\delete{$i$}\;
				\DataSty{graph.ce}.\append{\divEdge}\;
				\foundReachable = True\;
			}
			\Else{
				$i++$\;
			}
		}
	}
}
\end{algorithm}
\DecMargin{1em}

Suppose there are $k$ convergent-edges, $m$ divergent-edges, and $n$ vertices in the extended directed graph. It is seen that the complexity of ``isReachable" part is at most $O(m+n+k)$. Since we have $m$ divergent-edges, the complexity of our algorithm is at most
$$
O((m+n+k) \cdot (m+(m-1)+...+1))=O\left((m+n+k) \cdot \dfrac{m(m-1)}{2}\right) = O(m^3).
$$


\end{document}